\newtheorem*{RORP}{The Robust Output Regulation Problem}
\newcommand{\indim}{r}
\newcommand{\yref}{y_{\mbox{\scriptsize\textit{ref}}}}
\newcommand{\yrefhat}{\hat{y}_{\mbox{\scriptsize\textit{ref}}}}
\newcommand{\Ops}{\mc{O}}
\newcommand{\eps}{\epsilon}
\renewcommand{\vec}[1]{#1}
\newcommand{\Hinf}{H_\infty}
\newcommand{\yrk}[1][k]{a_{#1}}
\newcommand*{\keyterm}[1]{\emph{#1}}% Math Operators
\DeclareMathOperator{\re}{Re}			% Real Part
\DeclareMathOperator{\rank}{rank}		% Rank
\DeclareMathOperator{\Span}{span}		% Span
\newcommand{\ran}{\mathcal{R}}						% Range
\renewcommand{\ker}{\mathcal{N}}					% Null space
\newcommand*{\ldelim}[2]{\csname#1l\endcsname#2}   % Left
\newcommand*{\rdelim}[2]{\csname#1r\endcsname#2}   % Right
\newcommand*{\mdelim}[2]{\csname#1m\endcsname#2}   % Middle
\newcommand{\gd}{\delta}
\newcommand{\gw}{\omega}
\newcommand*{\C}{{\mathbb{C}}}     % Complex
\newcommand*{\norm}[1]{\lVert#1\rVert}
\newcommand*{\set} [1]{\{#1\}}
\newcommand*{\setm}[2]{\{\,#1\mid#2\,\}}   % Set with middle |
\newcommand*{\Abs}[2][default]{\ifthenelse{\equal{#1}{default}}{\left\lvert#2\right\rvert}{\ldelim{#1}{\lvert}#2\rdelim{#1}{\rvert}}}
\newcommand*{\Norm}[2][default]{\ifthenelse{\equal{#1}{default}}{\left\lVert#2\right\rVert}{\ldelim{#1}{\lVert}#2\rdelim{#1}{\rVert}}}
\newcommand*{\Iprod}[3][default]{\ifthenelse{\equal{#1}{default}}{\left\langle#2,#3\right\rangle}{\ldelim{#1}{\langle}#2,#3\rdelim{#1}{\rangle}}}
\newcommand*{\Dualpair}[3][default]{\ifthenelse{\equal{#1}{default}}{\left\langle#2,#3\right\rangle}{\ldelim{#1}{\langle}#2,#3\rdelim{#1}{\rangle}}}
\newcommand*{\List}[2][1]{\set{#1,\ldots,#2}}
\newcommand*{\Set}[2][default]{\ifthenelse{\equal{#1}{default}}{\left\{#2\right\}}{\ldelim{#1}{\{}#2\rdelim{#1}{\}}}}
\newcommand*{\dda}[3][1]{\ifthenelse{\equal{#1}{1}}{\frac{d#3}{d#2}}{\frac{d^{#1}#3}{d#2^{#1}}}}
\newcommand*{\ddb}[2][1]{\ifthenelse{\equal{#1}{1}}{\frac{d}{d#2}}{\frac{d^{#1}}{d#2^{#1}}}}
\newcommand*{\pd}[3][1]{\ifthenelse{\equal{#1}{1}}{\frac{\partial{#2}}{\partial{#3}}}{\frac{\partial^{#1}{#2}}{\partial#3^{#1}}}}
\newcommand*{\pdb}[2][1]{\ifthenelse{\equal{#1}{1}}{\frac{\partial}{\partial{#2}}}{\frac{\partial^{#1}}{\partial#2^{#1}}}}
\newcommand*{\inv}{^{-1}}
\renewcommand{\vec}[1]{\underline{#1}}
\newcommand{\mc}[1]{\mathcal{#1}}
\newcommand{\eq}[1]{\begin{align*}#1\end{align*}}
\newcommand{\eqn}[1]{\begin{align}#1\end{align}}
\newtheorem{theorem}{Theorem}[section]
\newtheorem{lemma}[theorem]{Lemma}
\newtheorem{corollary}[theorem]{Corollary}
\theoremstyle{definition}
\newtheorem{remark}[theorem]{Remark}
\begin{document}

\title{Reduced Order Internal Models in the Frequency Domain}

\thispagestyle{plain}

\author{Petteri Laakkonen and Lassi Paunonen}
\address{$^\dagger$ Laboratory of Mathematics, Tampere University of Technology, PO.\ Box 553, 33101 Tampere, Finland}
\email{petteri.laakkonen@tut.fi, lassi.paunonen@tut.fi}

\begin{abstract}
The internal model principle states that all robustly regulating controllers must contain a suitably reduplicated internal model of the signal to be regulated. Using frequency domain methods, we show that the number of the copies may be reduced if the class of perturbations in the problem is restricted. We present a two stage design procedure for a simple controller containing a reduced order internal model achieving robust regulation. The results are illustrated with an example of a five tank laboratory process where a restricted class of perturbations arises naturally.
\end{abstract}

\subjclass[2010]{%
%%Primary (Secondary)
93C05, % Linear systems
93B51,  % Design techniques (robust design, computer-aided design, etc.)
%93D09, % Robust stability
93B52% Feedback control
%(93B28) %Operator-theoretic methods 
}
\keywords{Linear systems, model/controller reduction, output tracking, robust control.} 

\maketitle

\section{Introduction}

%The output regulation aims at finding 
The main goal in output regulation is to find
a controller such that the output of a given plant asymptotically follows a given reference signal generated by an exosystem. It is known that a regulating 
feedback
controller contains a built-in copy of the exosystem \cite{Fra77}. Robustness of regulation is needed in order to make the controller work despite some perturbations of the plant, e.g., parameter uncertainties and modelling errors. If the controller is required to tolerate arbitrary small perturbations of the plant, then the internal model principle due to Francis and Wonham \cite{FraWon75a} and Davison \cite{Dav76} states that if the plant has $p$-dimensional output space, then every robustly regulating controller must contain a $p$-fold copy or in short $p$-copy of the exosystem.

%An infinite $\Ops$ may occur when the system has an explicit structure, but there are some parameter uncertainties. Then we may assume that all the plants near the nominal plant share the same structure and this knowledge may be exploited when designing a controller. An example of this kind of situation can be found in Section \ref{sec:example}).

In this paper we study
the robust regulation problem in a situation 
where the controller is only required to 
%where the controller is to
tolerate uncertainties from a restricted class $\Ops$ of perturbations.
Such a situation can arise due to
%instead of all arbitrary small perturbations can naturally arise due to 
several different reasons. 
In the simplest situation, $\Ops$ contains only a finite number of plants, for example, if the controller is required to function after specific component failures~\cite{LocSchi11}. 
In the case of only 
%If there is only
one possible failure, the original plant $P(\cdot)$ changes to a new plant $P_f(\cdot)$ and $\Ops=\{P(\cdot), P_f(\cdot)\}$. 
On the other hand, the class $\Ops$ becomes infinite in a situation where the values of some specific parameters of the plant are not known accurately
%specific parameters of the system have 
%The class $\Ops$ typically contains an infinite number of perturbed plants 
%if there are parameter uncertainties in specific parameters
\cite{KimSkoMorBraatz2014,PauPoh13a}. Our example in Section~\ref{sec:example} illustrates the latter case.

%By the above discussion,
%Since a controller achieving output regulation
%for a restricted class of perturbations
%must contain at least one copy of the exosystem,  
In the situation where robustness is only required with respect to a given class $\Ops$ of perturbations, 
it is natural to ask if the controller must contain a full $p$-copy internal model of the exosystem.
%necessary to achieve robustness with respect to only a 
%the natural question to ask if the 
%then is whether or not a full $p$-copy is needed. 
This problem was studied by Paunonen in~\cite{PauPoh13a,Pau15a} using state space methods. It was shown in~\cite{PauPoh13a} that the 
$p$-copy internal model guaranteeing robustness with respect to
%requirement of the $p$-copy 
%imposed by
%the class of 
all small perturbations can be relaxed in 
many
situations, and this observation 
leads to design of controllers with so-called \keyterm{reduced order internal models}. 
%The earlier references on
%In the 

In this paper, we introduce frequency domain conditions for 
a controller to achieve output regulation and robustness with respect to a given class $\Ops$ of perturbations. Our results give a precise meaning to reduced order internal models in the frequency domain. In addition, we present methods for
constructing controllers with reduced order internal models. 
Our constructions result in 
%Our results give
minimal complexity requirements for the number of copies built into the robustly regulating controllers.

The reference signals considered in this article are linear combinations of sinusoids, and in particular 
finite approximations of uniformly continuous periodic signals. In explicit,
we choose the reference signal to be of the form
\eqn{
\label{eq:yrefintro}
\yref(t) = \sum_{k=1}^q \yrk e^{i\gw_k t} %\qquad \yrk\in \C^p\setminus \set{0}
}
with distinct fixed real numbers $\gw_k$ and $a_k\in\C^p\setminus\{0\}$.
Our aim is to characterize conditions for controllers that make the output of the systems to converge to the reference signal as $t\to\infty$ with all plants in a given class $\Ops$ of perturbed systems.
%Then any stabilizing controller must reproduce the unstable frequencies $i\gw_k$ of the reference signal in order to be regulating. Regulation simply means that the error $e(t)$ between the output of the system and the reference signal approach zero as $t\to\infty$.

%which has the form
%\eqn{
%\label{eq:RefSignintro}
%\yrefhat(s)=\sum_{k=1}^{q}\frac{a_k}{s-i\gw_k}
%}
%in the frequency domain. Then the problem is to find an error feedback controller $C$ of Figure~\ref{fig:Closedloop} such that the error $\hat{e}$ is a stable vector.

%The class of signals that can be presented in this form is fairly large as 
%and interesting. In particular,
%it contains reference signals that are linear combinations of sinusoids, and in particular 
%finite approximations of uniformly continuous periodic signals.
%such as sawtooth functions.

In the first part of this paper we present our theoretical results. Our first main result
%shows how every frequency component of the reference signal \eqref{eq:yrefintro} must be built into a regulating controller, or in frequency domain terms, the controller must have a pole at the frequency to be regulated.
%%However, if the class of perturbations of the plant is restricted, we can align these poles with the reference signals allowing reduction of the size of the internal model, which is the second main result of the paper.
%This result leads to
%%gives a description when a stabilizing controller is regulating, and results to 
%a formulation of
is the frequency domain formulation of the internal model principle for reduced order internal models.
%contains an internal model of an frequency component appearing in the reference signal.
%In mathematical terms,
The result states that
%the first main result
%% follows that
%implies that 
a stabilizing controller
\eqn{
\label{eq:ControllerIntro}
C(s)=\sum_{k=1}^q\frac{C_k}{s-i\gw_k}+C_{0}(s),
}
where $C_0(\cdot)$ is analytic at $i\omega_k$, is robustly regulating for the class $\Ops$ of perturbed plants if and only if
\eqn{
\label{eq:Robcharintro}
\yrk \in \ran(\widetilde{P}(i\gw_k)C_k), \qquad \forall k\in \List{q}
}
for all $\widetilde{P}(\cdot)\in \Ops$. The component $(s-i\gw_k)^{-1} C_k$ is the internal model of the frequency component $\yrk e^{i\gw_k t}$ of the reference signal \eqref{eq:yrefintro} and the condition \eqref{eq:Robcharintro} shows how to align the pole of the controller with the corresponding frequency component.
% of the reference signal. 
This is the frequency domain analogue of the time domain condition presented in~\cite{PauPoh13a}.

The condition~\eqref{eq:Robcharintro} leads to our second main results that
gives a lower bound for the ranks of the matrices $C_k$ in the controller, i.e. it gives the size of the minimal internal model required for robust regulation. In particular, if the plants have % the same number 
$p$ %of 
inputs and outputs and $\widetilde{P}(\cdot)\in\Ops$ are invertible at $i\gw_k$, then the lower bounds for the ranks of $C_k$ in the controller~\eqref{eq:ControllerIntro} that is robust with respect to the class $\Ops$ are
\eqn{
\label{eq:Rankcondintro}
\hspace{-1ex}
\rank (C_k)\geq \sigma_k:= 
%\dim \left( \Span\setm{\widetilde{P}(i\gw_k)\inv\yrk}{\widetilde{P}(\cdot)\in\Ops} \right).
\dim\left( \Span\setm{\widetilde{P}(i\gw_k)\inv\yrk}{\widetilde{P}(\cdot)\in\Ops}\right) .
}
The controller constructions presented later in Section~\ref{sec:ContDesign} show that the lower bounds $\sigma_k$ are optimal in the sense that robustness with respect to a class $\Ops$ can be achieved with a controller
%, as our method results in controllers
satisfying $\rank (C_k)=\sigma_k$ for $k\in \List{q}$.
%Our characterization of robustness with respect to 
%a class $\Ops$ 
%Our results
%also give a concrete meaning to the reduced internal models in the frequency domain. In particular, these are controller 
In the frequency domain, a
controller containing a full $p$-copy internal model 
%of the exosystem
satisfies $\rank (C_k)=p$ for all $k$~\cite{LaaPoh15}. 
%In light of the presented results, 
%the frequency domain interpretation is that
%%On the other hand,
%%we can interpret that
%the controller~\eqref{eq:ControllerIntro} contains a \keyterm{reduced order internal model} if $\rank (C_k)<p$ for some $k$.
%  It is therefore 
We therefore say that 
the controller~\eqref{eq:ControllerIntro} contains a \keyterm{reduced order internal model} of the reference signal if it satisfies the conditions for robustness for a class $\Ops$ of perturbations and $\rank (C_k)<p$ for some $k$.
%  the frequency domain interpretation is that
%On the other hand,
%we can interpret that
%  the controller~\eqref{eq:ControllerIntro} contains a \keyterm{reduced order internal model} if $\rank (C_k)<p$ for some $k$.
%controllers 
%the interpretation of the 
%Furthermore, the matrices $C_k$ of controllers that robustly regulate all small perturbations must be of full rank \cite{LaaPoh15}. In the light of above discussion frequency domain interpretation of $p$-copy is that $C_k$ of \eqref{eq:ControllerIntro} the have full rank and a reduced order internal model means that some of $C_k$ have ranks strictly lower than $p$.
%
In a situation where $\sigma_k<p$ for some $k$, e.g., when $p>2$ and $\Ops$ contains only two plants, robust output regulation can then be achieved without the full internal model of the reference signal.
%this demonstrates that robustness with respect to certain classes $\Ops$ of perturbations may be achieved without a full internal model in the controller which coincide with the time domain results of~\cite{PauPoh13a}. 
%These bounds are shown to be minimal, since we are able to find a robustly regulating controller achieving this bound in the second part of this article. 

In the second part of the paper we construct a 
%reduced order
controller that solves the robust output regulation problem for a given class $\Ops$ of perturbations. 
In the design procedure we first stabilize the system and then design a robustly regulating controller for the stabilized plant.  
%
%The design procedure is a two stage process where we first stabilize the system and then design a robustly regulating controller for the stabilized plant.
The robust regulation of the stabilized plant is achieved using a controller
\eq{
%\label{eq:StableControllerIntro}
C_r(s) = \eps \sum_{k=1}^q \frac{C_k}{s-i\gw_k},
}
where
$C_k\in \C^{m\times p}$ are
chosen in such a way that they satisfy the regulation condition \eqref{eq:Robcharintro} and have ranks $\sigma_k$ defined in~\eqref{eq:Rankcondintro}.
%We choose small $\epsilon>0$ so that we have stability.
%This guarantees that the controller is regulating. 
%In addition, we choose $C_k$ such that the eigenvalues of
%the matrices
%$P(i\gw_k)C_k\in \C^{p\times p}$ have nonpositive real parts.
%We show that for such choices of parameters there exists $\eps^\ast>0$ such that for any $0<\eps\leq \eps^\ast$ the controller stabilizes the closed-loop system.
%stabilizes the stable plant.
Controllers of this form have been used in robust output regulation with full internal models in~\cite{HamPoh00,LogTow97,RebWeiss03}.
%, but there $C_k$ were of full order, since all arbitrarily small perturbations were allowed.

In the final part of the paper we illustrate the results
by designing a robustly regulating controller for a laboratory process with five water tanks. 
In the studied experimental setup the restricted class of perturbations arises naturally from considering the unknown valve positions of the water tank system as parameters with uncertainty. The constructed controller containing a reduced order internal model achieves output regulation irregardless of the valve positions.
%uncertainty in the parameters of the system.
%In this example using a restricted set of perturbations is easily justified.

Robust output regulation with a restricted class of perturbations 
has been studied previously using frequency domain techniques for stable systems in%
%(a conference paper)
~\cite{LaaPau16} by the authors. In this paper we extend the results of~\cite{LaaPau16} most notably 
by generalizing the characterization~\eqref{eq:Robcharintro} of robust controllers \eqref{eq:ControllerIntro} with simple poles to controllers with higher order poles,
by introducing a controller design procedure for unstable plants,
and by establishing the optimality of the presented lower bounds for $\rank (C_k)$.
%establish a new sufficient solvability condition,
%we present a new 
%Our results also establish a new sufficient condition for the solvability of the robust output regulation problem for a class $\Ops$ of perturbations.
%\lassimod{
%  (voidaanko poistaa?)
%  Finally, we present a new real world example on construction of controllers with reduced order internal models.
%}
%unstable system
%In this paper we improve the results of \cite{LaaPau16} in the following way:
%\begin{itemize}
%\item We introduce a controller design method for unstable plants,
%\item give a necessary and sufficient condition for the solvability of the robust regulation problem,
%\item give a new simplified proof of the main result (Theorem \ref{thm:Robchar2}),
%\item show that the lower bound for the order of the internal model given in Theorem \ref{thm:MinimumRank} is optimal, and
%\item give a new real world example illustrating the results.
%\end{itemize}
%In addition, 
Locatelli and Schiavoni studied 
a similar control problem
%a problem similar to ours
in~\cite{LocSchi11}. However, in~\cite{LocSchi11} 
%the controller was required to be robust 
%they required 
the controller was required to be robustly regulating in a small neighborhood of a given finite set of plants, and consequently the controller required a full $p$-copy of the exosystem. 
%The theory developed here allows reduced order internal models leading to simplified controllers.

\section{The Robust Output Regulation Problem}

In this section we introduce the notation used in this paper and state the robust output regulation problem. 
%The set of complex numbers is denoted by $\C$ and the $n$-dimensional Euclidean space over $\C$ is denoted by $\C^n$. The dimension of a subspace $S\subseteq\C^n$ is $\dim(S)$.  
We denote the class of functions that are bounded and analytic in the right half plane $\C_+:=\setm{s\in\C}{\re(s)>0}$ by $\Hinf$. 
%The set of all matrices of arbitrary size and of all $n\times m$-matrices over a set $S$ are denoted by $\mc{M}(S)$ and $\mc{M}^{n\times m}(S)$, respectively. 
The set of all matrices of arbitrary size over the set 
$\Hinf$ is denoted by $\mc{M}(\Hinf)$.
We denote the rank, the range, the kernel, and the Moore-Penrose pseudoinverse of a matrix 
%$M\in\mc{M}(\C)$
$A\in \C^{n\times m}$
by $\rank(A)$, $\ran(A)$, $\ker(A)$, and $A^+$, respectively.
%The subspace spanned by the set of vectors $X\subset \C^n$ is denoted by $\Span(X)$.

\subsection{Class $\Ops$ of Perturbations}

Throughout the paper we assume that the class $\Ops$ of perturbations has the following properties.
%throughout the rest of the paper. 
\begin{itemize}
  \item The nominal plant is in the class $\Ops$, i.e., $P(\cdot)\in \Ops$.
  \item Every $\widetilde{P}(\cdot)\in\Ops$ is analytic at the points $\set{i\gw_k}_{k=1}^q$.
%    $\widetilde{P}(i\gw_k)$ is surjective for all $k\in \List{q}$.
\end{itemize}

\subsection{Robust Output Regulation for a Class $\Ops$ of Perturbations}

We consider an error feedback controller of the form
%\subsection{The Controller}
\eqn{
\label{eq:Controller}
C(s)=\sum_{k=1}^q\sum_{n=1}^{q_k}\frac{C^{(k)}_{-n}}{(s-i\gw_k)^n}+C_{0}(s)
}
where $C_{-n}^{(k)}\in \C^{\indim\times p}$, $C_{-q_k}^{(k)}\neq 0$,
$q_k\geq 1$, and 
$C_{0}(\cdot)$ is analytic at $i\gw_k$ for all $k\in \List{q}$.
In particular, the poles of the controller are located at the frequencies $i\gw_k$ of the reference signal~\eqref{eq:yrefintro} and that their orders are
greater than or equal to one.
%at least one. 
The plant and the controller form the closed-loop system depicted in Figure~\ref{fig:Closedloop}. Here $\hat{d}$ is an external disturbance. The closed-loop transfer function from $(\yrefhat,\hat{d})$ to $(\hat{e},\hat{u})$ is
\eq{
	H(P,C)=\begin{bmatrix}
		(I-PC)\inv & (I-PC)\inv P\\    	C(I-PC)\inv & I+C(I-PC)\inv P\\
	\end{bmatrix}.
}
\begin{figure}[ht]
	\centering
	\begin{overpic}[scale=0.8]{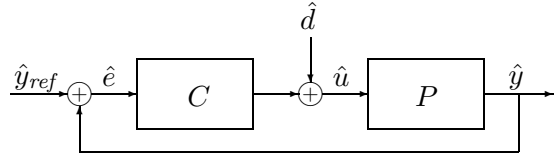}
		\put(1,13){$\yrefhat$}
		\put(17,12.5){$\hat{e}$}
		\put(32.5,9){$C$}
		\put(59,12.5){$\hat{u}$}
		\put(53,23){$\hat{d}$}
		\put(74,9){$P$}
		\put(91,13){$\hat{y}$}
	\end{overpic}
	\caption{The closed-loop system.}
	\label{fig:Closedloop}
\end{figure}

\begin{RORP}
  Given a class $\Ops$ of perturbations, choose the parameters $C_0(\cdot)$ and $C_{-n}^{(k)}$, $k\in\List{q}$ and $n\in\List{q_k}$, of the controller~\eqref{eq:Controller} in such a way that 
  \begin{itemize}
      \setlength{\itemsep}{.7ex}
    \item[\textup{(a)}] The controller $C(\cdot)$ stabilizes the plant $P(\cdot)$, i.e. $H(P,C)\in\mc{M}(\Hinf)$.
    \item[\textup{(b)}] If $\widetilde{P}(\cdot)\in\Ops$ is such that $C(\cdot)$ stabilizes $\widetilde{P}(\cdot)$, then
      \eqn{
      \label{eq:RegCond}
      (I-\widetilde{P}(\cdot)C(\cdot))\inv  \yrefhat(\cdot)\in \mc{M}(\Hinf), %(\C_-;\C^p))
      }
      where $\yrefhat(\cdot)$ is the Laplace transform of $\yref(t)$ in \eqref{eq:yrefintro}, i.e.,
      \eqn{
      \label{eq:RefSign}
      \yrefhat(s)=\sum_{k=1}^{q}\frac{a_k}{s-i\gw_k}.
      }
  \end{itemize} 
\end{RORP}

If condition~\eqref{eq:RegCond} is satisfied, we say that $C(\cdot)$ \keyterm{regulates} $\widetilde{P}(\cdot)\in\Ops$. In the time-domain, this corresponds to the output $y(\cdot)$ converging to $\yref(\cdot)$ 
%of \eqref{eq:yrefintro}
asymptotically with respect to time.

%Our aim is to keep the controller as simple as possible. In particular, we want its realization to have as low order as possible. This is why we restrict our study to the controllers having only simple poles at the poles of the reference signals, i.e. we assume that the controller is of the form
%\eqn{
%\label{eq:ContSimplePole}
%C(s)=\sum_{k=1}^q\frac{C_k}{s-i\gw_k}+C_{0}(s)
%}
%where $C_{0}(\cdot)$ is analytic at $i\gw_k$ for all $k=1,\ldots,q$.

%\subsection{RORP for Restricted Classes of Perturbations}
%
%Assume we only consider perturbations in the class $\Ops$ of perturbed plants. 
%\textit{The class $\Ops$ does not assume preservation of closed-loop stability} 
%(such an assumption can not be made before designing a controller).

%\section{Characterization of Regulation: Conditions at $s=i\gw_k$}
\section{Characterization of Robustness With Respect to a Class of Perturbations}

In this section we present a characterization for controllers that are robust with respect to a given class $\Ops$ of perturbations.
%In the above theorem, we did not need any assumptions of the controller. For the controller \eqref{eq:Controller} we have the following characterization.
Since by assumption $\widetilde{P}(\cdot)\in \Ops$ is analytic at $i\gw_k$ and $C(\cdot)$ has pole of order $q_k\geq 1$ at $i\gw_k$, their Laurent series expansions are given by
$$
\widetilde{P}(s)=\sum_{n=0}^{\infty}(s-i\omega_k)^n \widetilde{P}^{(k)}_n, \quad C(s)=\sum_{n=-q_k}^{\infty}(s-i\omega_k)^n C^{(k)}_{n}.
$$
The function $I-\widetilde{P}(\cdot)C(\cdot)$ has the Laurent series expansion 
\eqn{
  \label{eq:IPCLaurent}
  I-\widetilde{P}(s)C(s)=\sum_{n=-q_k}^{\infty}(s-i\gw_k)^n X_n^{(k)}
\shortintertext{where}
\nonumber
  X^{(k)}_n=\gd_{n0}I-\sum_{m=0}^{q_k+n}\widetilde{P}^{(k)}_m C^{(k)}_{n-m}.
}
Here $\gd_{nm}$ is the Kronecker delta.
%\begin{align}\label{eq:SensitivityLaurent1}
%X^{(k)}_i=-\sum_{m=0}^{n}P^{(k)}_m C^{(k)}_{n-q_k-m}
%\end{align}
%if $-q_k\leq n\neq 0$, and 
%\begin{align}\label{eq:SensitivityLaurent2}
%X^{(k)}_{0}=I-\sum_{m=0}^{q_k}P^{(k)}_m C^{(k)}_{-q_k-m}.
%\end{align}
The following theorem is the main result of this section.

\begin{theorem}\label{thm:Robchar2}
Assume that the controller is of the form \eqref{eq:Controller} and 
let the Laurent series expansion of $I-\widetilde{P}(s)C(s)$ at $i\gw_k$ be given by~\eqref{eq:IPCLaurent}. Then the following hold.
%$$
%I-\widetilde{P}(s)C(s)=\sum_{n=-q_k}^{\infty}(s-i\gw_k)^n X^{(k)}_n
%$$
%where the coefficients are given by \eqref{eq:SensitivityLaurent1} and \eqref{eq:SensitivityLaurent2}.
\begin{enumerate}[(i)]
  \item If $\widetilde{P}(\cdot)\in \Ops$ is such that $C(\cdot)$ stabilizes $\widetilde{P}(\cdot)$, then $C(\cdot)$ regulates $\widetilde{P}(\cdot)$ if and only if the equation
      \eqn{\label{eq:Regulationcondition}
      \begin{bmatrix}
      X^{(k)}_{-q_k} & 0 & \cdots & 0\\
      X^{(k)}_{1-q_k} & X^{(k)}_{-q_k} & \cdots & 0\\
%      X^{(k)}_{2-q_k} & X^{(k)}_{1-q_k} & X^{(k)}_{-q_k} & \cdots & 0\\
      \vdots & \vdots  & \ddots & \vdots\\
      X^{(k)}_{-1} & X^{(k)}_{-2} & \cdots & X^{(k)}_{-q_k}
      \end{bmatrix}
      \begin{bmatrix}
      z^{(k)}_1 \\z^{(k)}_2\\\vdots\\z^{(k)}_{q_k}
      \end{bmatrix}
      =
      \begin{bmatrix}
      0 \\\vdots\\0\\a_k
      \end{bmatrix}
      }
    is solvable for all $k\in\List{q}$.
    For any $k\in\List{q}$ for which $q_k=1$ the solvability of~\eqref{eq:Regulationcondition} is equivalent to
    \eqn{\label{eq:Rangecondition}
    \yrk\in\ran\left(\widetilde{P}(i\gw_k)C^{(k)}_{-1}\right).
    }
    If the plant has the same number of inputs and outputs, i.e., $\indim=p$, and $\widetilde{P}(i\gw_k)$ is invertible,
%    \lassiques{$\widetilde{P}(i\omega_k)$ is invertible, (korvataanko ``If $U=Y$''?)} 
    then~\eqref{eq:Regulationcondition}
    is equivalent to
    \eq{
%      \label{eq:Regulationcondition2}
    \begin{bmatrix}
    C^{(k)}_{-q_k} & 0 & \cdots & 0\\
    C^{(k)}_{1-q_k} & C^{(k)}_{-q_k} & \cdots & 0\\
    % X^{(k)}_{2-q_k} & X^{(k)}_{1-q_k} & X^{(k)}_{-q_k} & \cdots & 0\\
    \vdots & \vdots  & \ddots & \vdots\\
    C^{(k)}_{-1} & C^{(k)}_{-2} & \cdots & C^{(k)}_{-q_k}
    \end{bmatrix}
    \begin{bmatrix}
    z^{(k)}_1 \\z^{(k)}_2\\\vdots\\z^{(k)}_{q_k}
    \end{bmatrix}
    =
    \begin{bmatrix}
    0 \\\vdots\\0\\\widetilde{P}^{-1}(i\gw_k)a_k
    \end{bmatrix}
    }
  \item If $C(\cdot)$ stabilizes $P(\cdot)$, then it solves the robust output regulation problem for the class $\Ops$ of perturbations if and only if~\eqref{eq:Regulationcondition} is satisfied for all $\widetilde{P}(\cdot)\in \Ops$ that are stabilized by the controller $C(\cdot)$.
\end{enumerate}
\end{theorem}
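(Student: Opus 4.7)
The plan is to characterize regulation at each frequency $i\gw_k$ as a local Laurent-coefficient matching problem and then to recognize \eqref{eq:Regulationcondition} as precisely that condition. Set $G(s):=I-\widetilde{P}(s)C(s)$ and $\hat{u}(s):=G(s)^{-1}\yrefhat(s)$. When $C$ stabilizes $\widetilde{P}$, the function $G^{-1}$ lies in $\mathcal{M}(\Hinf)$ and, being meromorphic and bounded on $\C_+$, cannot have a pole on the imaginary axis; hence $G^{-1}$ is analytic in a neighborhood of every $i\gw_k$. Since the only singularities of $\yrefhat$ in $\overline{\C_+}$ are the simple poles at $i\gw_k$ with residues $a_k$, the regulation condition \eqref{eq:RegCond} is equivalent to $\hat u$ being analytic at each $i\gw_k$.

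For the forward direction of part (i), I would assume $\hat u$ is analytic at $i\gw_k$ and expand $\hat{u}(s)=\sum_{m\geq 0}(s-i\gw_k)^m u_m^{(k)}$. Substituting this into the identity $G(s)\hat{u}(s)=\yrefhat(s)$, using the Laurent expansion \eqref{eq:IPCLaurent} of $G$ together with the principal part $a_k/(s-i\gw_k)$ of $\yrefhat$, and matching the coefficients of $(s-i\gw_k)^l$ for $l=-q_k,\ldots,-1$, one reads off exactly the system \eqref{eq:Regulationcondition} with $z_j^{(k)}=u_{j-1}^{(k)}$. For the converse, given a solution $(z_1^{(k)},\ldots,z_{q_k}^{(k)})$, set $\tilde{u}(s):=\sum_{j=1}^{q_k}z_j^{(k)}(s-i\gw_k)^{j-1}$. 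Then \eqref{eq:Regulationcondition} says precisely that the Laurent coefficients of $v(s):=G(s)\tilde{u}(s)-\yrefhat(s)$ at orders $-q_k,\ldots,-1$ vanish, so $v$ is analytic at $i\gw_k$. Defining $\hat{u}:=\tilde{u}-G^{-1}v$ produces a function that is analytic at $i\gw_k$ (because $G^{-1}$ is analytic there) and satisfies $G\hat{u}=\yrefhat$, so it must coincide with $G^{-1}\yrefhat$, yielding regulation.

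The two special cases follow by elementary linear algebra. For $q_k=1$, \eqref{eq:Regulationcondition} collapses to $-\widetilde{P}(i\gw_k)C_{-1}^{(k)}z_1=a_k$, which is solvable exactly when $a_k\in\ran(\widetilde{P}(i\gw_k)C_{-1}^{(k)})$, giving \eqref{eq:Rangecondition}. In the square case with $\widetilde{P}(i\gw_k)$ invertible, one recognizes the block matrix in \eqref{eq:Regulationcondition} as $-\mathbf{P}\mathbf{C}$, where $\mathbf{P}$ and $\mathbf{C}$ are the lower block-triangular Toeplitz matrices built from the Taylor coefficients $\widetilde{P}_m^{(k)}$ and the principal-part coefficients $C_{-m}^{(k)}$ respectively; invertibility of $\widetilde{P}(i\gw_k)$ makes $\mathbf{P}$ invertible with a block-Toeplitz inverse, and multiplying through, together with an inconsequential sign absorption in $z$, yields the stated equivalent system. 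Part (ii) is then immediate: stabilization of $P$ by $C$ supplies condition (a) of the RORP, and condition (b) reduces by part (i) to the solvability of \eqref{eq:Regulationcondition} for every $\widetilde{P}\in\Ops$ that $C$ stabilizes.

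The main obstacle I anticipate is the converse in part (i): the finite system \eqref{eq:Regulationcondition} only controls a handful of Laurent coefficients, yet regulation requires $\hat u$ to be a globally defined element of $\mathcal{M}(\Hinf)$. The construction $\hat u=\tilde u-G^{-1}v$, which leverages the analyticity of $G^{-1}$ at $i\gw_k$ provided by stabilization, is the crucial device that bridges the local algebraic condition with the global analytic requirement.
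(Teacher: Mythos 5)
Your proposal is correct, and it reaches the equivalence by a somewhat different mechanism than the paper. Both arguments share the same skeleton: closed-loop stability gives $(I-\widetilde{P}(s)C(s))^{-1}\in\mc{M}(\Hinf)$, hence analyticity of $G^{-1}$ at each $i\gw_k$, and regulation \eqref{eq:RegCond} reduces to a purely local condition there (in the paper this is phrased as $H_0^{(k)}a_k=0$, where $H_n^{(k)}$ are the Taylor coefficients of $G^{-1}$, which is exactly your ``$\hat u=G^{-1}\yrefhat$ analytic at $i\gw_k$''). Where you diverge is in how the local condition is tied to \eqref{eq:Regulationcondition}: the paper works at the level of the coefficient identities $\sum_{m} H_m X_{n-m}=\delta_{n0}I$ and $\sum_m X_{n-m}H_m=\delta_{n0}I$ obtained from $G^{-1}G=GG^{-1}=I$, proving one direction by a double-sum rearrangement and the other by exhibiting the explicit solution $z_m=H_m a_k$; you instead read \eqref{eq:Regulationcondition} directly as the Laurent-coefficient matching of $G(s)z(s)=\yrefhat(s)$ at orders $-q_k,\dots,-1$, take $z_j$ to be the Taylor coefficients of $\hat u$ for the forward direction, and for the converse use the device $\hat u=\tilde u-G^{-1}v$ with $\tilde u$ the polynomial built from a solution and $v=G\tilde u-\yrefhat$ analytic. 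Your route buys a more transparent interpretation of the block-Toeplitz system (it is literally solvability of $G\hat u=\yrefhat$ with analytic $\hat u$, truncated to the principal part) and avoids the index manipulations; the paper's route keeps everything at the level of matrix coefficients, which is what it then reuses for the $q_k=1$ reduction via $X_{-1}^{(k)}=-\widetilde{P}(i\gw_k)C_{-1}^{(k)}$ and for the factorization of the block matrix as a product of the Toeplitz matrices $T_P$ and $T_C$ in the square invertible case. Your treatment of the two special cases and of part (ii) matches the paper's (you are in fact slightly more careful about the inconsequential sign when passing from $-T_PT_C$ to the stated system), so there is no gap.
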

\begin{proof}
Part (ii) is a direct consequence of (i) and the statement of the robust output regulation problem. To prove (i), 
let $\widetilde{P}(\cdot)\in \Ops$ be such that $C(\cdot)$ stabilizes $\widetilde{P}(\cdot)$.
Closed-loop stability implies $(I-\widetilde{P}(s)C(s))^{-1}\in\mc{M}(\Hinf)$, and we have the Taylor series
\begin{align*}
(I-\widetilde{P}(s)C(s))^{-1}=\sum_{n=0}^{\infty}(s-i\gw_k)^n H^{(k)}_n.
\end{align*}
at $i\gw_k$. We observe that \eqref{eq:RegCond} is equivalent to the condition
\eqn{
\label{eq:RegCond0}
H_0^{(k)}a_k=0
}
being satisfied for all $k\in\List{q}$. Thus, we need to show that \eqref{eq:RegCond0} and \eqref{eq:Regulationcondition} are equivalent.

Let $k\in\List{q}$ be arbitrary. For simplicity we omit the superscript $(k)$ in $X_n^{(k)}$, $H^{(k)}_n$, and $z_n^{(k)}$. Using the Laurent series expansions and 
$(I-\widetilde{P}(s)C(s))^{-1}(I-\widetilde{P}(s)C(s))=I$, we see that
\begin{align*}
I& =(I-\widetilde{P}(s)C(s))^{-1}(I-\widetilde{P}(s)C(s))\\
& =\sum_{n=-q_k}^{\infty}
\left((s-i\gw_k)^n\sum_{m=0}^{q_k+n} H_m X_{n-m}\right),
\end{align*}
in particular,
\eqn{
\label{eq:main1}
\sum_{m=0}^{q_k+n} H_m X_{n-m} = \begin{cases}
0, & \text{ if } -q_k\leq n<0\\
I, & \text{ if } n=0
\end{cases}.
}
Similarly $(I-\widetilde{P}(s)C(s))(I-\widetilde{P}(s)C(s))^{-1}=I$ implies
\eqn{
\label{eq:main2}
\sum_{m=0}^{q_k+n} X_{n-m} H_m= \begin{cases}
0, & \text{ if } -q_k\leq n<0\\
I, & \text{ if } n=0
\end{cases}.
}

If the condition~\eqref{eq:Regulationcondition} is satisfied, then its last row implies
\eqn{
\label{eq:Regulationcondition3}
H_{0}a_k=\sum_{l=1}^{q_k}H_0 X_{-l}z_l=\sum_{l=0}^{q_k-1}H_0 X_{l-q_k}z_{q_k-l}.
}
Equation~\eqref{eq:main1}
implies
$$H_0X_{-q_k}=0,\quad \text{ and }\quad H_0X_{l-q_k}=-\sum_{m=1}^{l}H_m X_{l-q_k-m}$$
for $l\in\{1,\ldots,q_k-1\}$. Substituting these into~\eqref{eq:Regulationcondition3}, we obtain
\begin{align*}
H_{0}a_k & =-\sum_{l=1}^{q_k-1}\sum_{m=1}^l H_m X_{l-q_k-m}z_{q_k-l}\\
 & = -\sum_{m=1}^{q_k-1}\left(H_m\sum_{l=1}^{q_k-m} X_{-m-l}z_{l}\right).
\end{align*}
This implies \eqref{eq:RegCond0}, since $\sum_{l=1}^{q_k-m} X_{-m-l}z_{l}=0$ for all $m\in\List{q_k-1}$ by~\eqref{eq:Regulationcondition}.

%Next we show the necessity of \eqref{eq:Rangecondition}. 
Assume now that~\eqref{eq:RegCond0} holds. Using~\eqref{eq:main2} and~\eqref{eq:RegCond0}
shows that
$$
\sum_{m=1}^{q_k+n}X_{n-m}H_m a_k=\begin{cases}
0, & \text{ if } -q_k+1\leq n<0\\
a_k, & \text{ if } n=0
\end{cases}
$$
which implies that $\displaystyle(z_1,z_2,\ldots,z_{q_k})$ with 
$z_m=H_m a_k$ is a solution of~\eqref{eq:Regulationcondition}.

If $q_k=1$,
%for some $k\in \set{1,\ldots,q}$,
then the 
equivalence of~\eqref{eq:Rangecondition} and the solvability of~\eqref{eq:Regulationcondition} follows from  $X_{-1}^{(k)} = -\widetilde{P}(i\gw_k)C_{-1}^{(k)}$.
%\eqref{eq:Regulationcondition} is equivalent to
%$$
%-\widetilde{P}(i\gw_k)C_{-1}z_1=X_{-1}z_1=a_k
%$$
%which is exactly~\eqref{eq:Rangecondition}. 
%Furthermore,
Finally, denote 
\begin{align*}
T_P  &=
      \begin{bmatrix}
      \widetilde{P}_{0} & 0 & \cdots & 0\\
      \widetilde{P}_{1} & \widetilde{P}_{0} & \cdots & 0\\
      % X^{(k)}_{2-q_k} & X^{(k)}_{1-q_k} & X^{(k)}_{-q_k} & \cdots & 0\\
      \vdots & \vdots  & \ddots & \vdots\\
      \widetilde{P}_{q_k} & \widetilde{P}_{q_k-1} & \cdots & \widetilde{P}_{0}
      \end{bmatrix}
\shortintertext{and}
%\end{align*}
%and
%\begin{align*}
T_C &=      \begin{bmatrix}
      C_{-q_k} & 0 & \cdots & 0\\
      C_{1-q_k} & C_{-q_k} & \cdots & 0\\
      % X^{(k)}_{2-q_k} & X^{(k)}_{1-q_k} & X^{(k)}_{-q_k} & \cdots & 0\\
      \vdots & \vdots  & \ddots & \vdots\\
      C_{-1} & C_{-2} & \cdots & C_{-q_k}
      \end{bmatrix}      
\end{align*}
If 
$\indim=p$
%the plant has the same number of inputs and outputs 
and
$\widetilde{P}_0=\widetilde{P}(i\gw_k)$ is invertible, 
then the final claim follows from 
%the 
%then~\eqref{eq:Regulationcondition} and~\eqref{eq:Regulationcondition2}
%are equivalent due to
%the property that
\begin{align*}
\begin{bmatrix}
      X_{-q_k} & 0 & \cdots & 0\\
      X_{1-q_k} & X_{-q_k} & \cdots & 0\\
%      X^{(k)}_{2-q_k} & X^{(k)}_{1-q_k} & X^{(k)}_{-q_k} & \cdots & 0\\
      \vdots & \vdots  & \ddots & \vdots\\
      X_{-1} & X_{-2} & \cdots & X_{-q_k}
      \end{bmatrix}
=
T_P T_C.
\end{align*}
\end{proof}

Theorem~\ref{thm:Robchar2} implies the following lower bounds $\sigma_k$ for the order of the internal model. Moreover,
%the following lower bounds for the ranks of the matrices $C_{-1}^{(k)}$ \lassiques{if the plants in $\Ops$ are invertible (tarvitaanko?)}. 
%part (iii) of the following theorem implies that 
the bounds $\sigma_k$ are optimal in the sense that they can be attained in the construction of controllers.
We call the bound $\sigma_k$ of the following theorem \keyterm{the minimal order of $i\gw_k$ in the internal model.}
Here $\widetilde{P}^{-1}(i\gw_k)\yrk$ denotes the preimage of $\yrk$.

%{\color{red}Vai: ``$C(\cdot)$ regulates ''$\tilde{P}(\cdot)\in \Ops$``?}
\begin{theorem}\label{thm:MinimumRank}
Let $\Ops$ be a class of perturbations of $P(\cdot)$.
Let $\sigma_k$ be the minimum dimension over all subspaces $\mc{K}_k\subset\C^p$ satisfying $\widetilde{P}^{-1}(i\gw_k)\yrk\cap \mc{K}_k\neq \emptyset$ 
%-- here $\widetilde{P}^{-1}(i\gw_k)\yrk$ is the preimage of $\yrk$ --
for all $\widetilde{P}(\cdot)\in\Ops$.
\begin{enumerate}[(i)]
\item If $C(\cdot)$ of the form \eqref{eq:Controller}, with $q_k=1$ for some $k\in\List{q}$, stabilizes all the plants in $\Ops$ and solves the robust output regulation problem for $\Ops$, % for a class $\Ops$ of perturbations, 
then 
\eq{
%\label{eq:ranklower}
\mathrm{rank}\left(C_{-1}^{(k)}\right)\geq \sigma_k.
}
\item
%  \lassimod{Kysymys: The role of $\tilde{P}$ here? ``there exists $\tilde{P}\in \Ops$ such that\dots''? (18) does not depend on $\tilde{P}$}
Assume that $\indim=p$
and that for some $k\in\List{q}$
$\rank(\widetilde{P}(i\omega_k))=p$ for every $\widetilde{P}(\cdot)\in\Ops$.
If $C(\cdot)$ 
in~\eqref{eq:Controller}
stabilizes all the plants in $\Ops$ and solves the robust output regulation problem for $\Ops$,
% If 
%  the plant has the same number of inputs and outputs, i.e.,
%  $\indim=p$, if
%  there exists $\widetilde{P}\in\Ops$ such that $\rank(\widetilde{P}(i\omega_k))=p$ for all $k\in\List{q}$ 
%  and $C(\cdot)$ in~\eqref{eq:Controller} stabilizes and regulates $\widetilde{P}$, 
then
%  and $\widetilde{P}\in\Ops$, then
\eq{
%\label{eq:ranklower2}
\mathrm{rank}\left(\begin{bmatrix}
C_{-1}^{(k)} & C_{-2}^{(k)} & \cdots & C_{-q_k}^{(k)}
\end{bmatrix}\right)\geq \sigma_k.
}
\item If $r=p$,
%  the plant has the same number of inputs and outputs,
$\rank(P(i\omega_k))=p$ for all $k\in\List{q}$, and $P(\cdot)$ is stabilizable, then 
the robust output regulation problem for $\Ops$ can be solved with a
%  there exists a robustly regulating 
controller~\eqref{eq:Controller}
with $q_k=1$ and
$\mathrm{rank}\bigl(C^{(k)}_{-1}\bigr)= \sigma_k$ for all $k\in\List{q}$.
%If $\rank\left(P(i\omega_k)\right)=p$ for all $k\in\List{q}$, then the lower bound $\sigma_k$ is optimal, i.e., if the robust regulation problem is solvable then it can be solved with a controller satisfying $\mathrm{rank}(C_k)= \sigma_k$ for all $k\in\List{q}$.
\end{enumerate}
\end{theorem}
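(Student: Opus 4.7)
The plan is to prove the three parts separately: (i) and (ii) reduce to straightforward applications of Theorem~\ref{thm:Robchar2}, while (iii) requires an explicit construction that attains the lower bound.

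For part (i), the hypothesis $q_k=1$ combined with the assumption that $C(\cdot)$ stabilizes every plant in $\Ops$ lets us invoke the range formulation in Theorem~\ref{thm:Robchar2}(i): for each $\widetilde{P}(\cdot)\in\Ops$ there is some $v\in\C^p$ with $C^{(k)}_{-1}v\in\widetilde{P}^{-1}(i\gw_k)\yrk$, so the subspace $\ran(C^{(k)}_{-1})$ meets every preimage $\widetilde{P}^{-1}(i\gw_k)\yrk$. The minimality in the definition of $\sigma_k$ then forces $\rank(C^{(k)}_{-1})\geq\sigma_k$. For part (ii) we use the final equivalence in Theorem~\ref{thm:Robchar2}(i): since $\indim=p$ and $\widetilde{P}(i\gw_k)$ is invertible for every $\widetilde{P}(\cdot)\in\Ops$, the block-Toeplitz matrix $T_P$ is invertible, so \eqref{eq:Regulationcondition} is equivalent to $T_C z = (0,\ldots,0,\widetilde{P}(i\gw_k)^{-1}\yrk)^\top$. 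Reading off the last block row yields $\widetilde{P}(i\gw_k)^{-1}\yrk\in\ran\bigl([\,C^{(k)}_{-1}\ \cdots\ C^{(k)}_{-q_k}\,]\bigr)$ for every $\widetilde{P}(\cdot)\in\Ops$, and because invertibility collapses each preimage to the singleton $\{\widetilde{P}(i\gw_k)^{-1}\yrk\}$, the range on the left meets every preimage, whence the claimed bound.

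Part (iii) is constructive. For each $k$ I would fix a subspace $\mc{K}_k\subset\C^p$ of minimum dimension $\sigma_k$ meeting every preimage $\widetilde{P}^{-1}(i\gw_k)\yrk$ and take $C^{(k)}_{-1}$ to be any $p\times p$ matrix with $\ran(C^{(k)}_{-1})=\mc{K}_k$; by construction $\rank(C^{(k)}_{-1})=\sigma_k$, and the range condition of Theorem~\ref{thm:Robchar2}(i) is satisfied for every $\widetilde{P}(\cdot)\in\Ops$. Following the two-stage recipe foreshadowed in Section~\ref{sec:ContDesign}, I would pre-stabilize $P(\cdot)$ with an arbitrary stabilizing feedback and then append the internal-model block $C_r(s) = \eps \sum_{k=1}^{q} C^{(k)}_{-1}/(s-i\gw_k)$; for $\eps>0$ sufficiently small a perturbation/small-gain argument should preserve closed-loop stability of $P(\cdot)$, and regulation for every $\widetilde{P}(\cdot)\in\Ops$ that is also stabilized then follows from Theorem~\ref{thm:Robchar2}(ii).

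The main obstacle is the stabilization step in (iii): the regulating block introduces poles on the imaginary axis into an already stabilized loop, so preservation of closed-loop stability is nontrivial. This is exactly where the full-rank hypothesis $\rank(P(i\gw_k))=p$ and the stabilizability of $P(\cdot)$ enter, and the technical details of the perturbation argument are the content of the explicit constructions developed in Section~\ref{sec:ContDesign}.
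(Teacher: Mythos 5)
Your parts (i) and (ii) are correct and follow the paper's own argument: for (i) you use the range condition \eqref{eq:Rangecondition} of Theorem~\ref{thm:Robchar2}(i) to show that $\ran\bigl(C^{(k)}_{-1}\bigr)$ intersects every preimage $\widetilde{P}^{-1}(i\gw_k)\yrk$, and for (ii) you invert the block-Toeplitz factor $T_P$ and read off the last block row to get $\widetilde{P}(i\gw_k)^{-1}\yrk\in\ran\bigl([\,C^{(k)}_{-1}\ \cdots\ C^{(k)}_{-q_k}\,]\bigr)$; this is exactly how the paper argues, and the minimality of $\sigma_k$ then gives both bounds.

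For part (iii) there is a genuine gap in your sketch. You claim that one may take $C^{(k)}_{-1}$ to be \emph{any} matrix with $\ran\bigl(C^{(k)}_{-1}\bigr)=\mc{K}_k$ and that closed-loop stability then follows for sufficiently small $\eps>0$ by a small-gain argument. This is false: the loop gain is unbounded near $i\gw_k$, so no standard small-gain reasoning applies, and a root-locus consideration shows that the closed-loop poles emanating from $i\gw_k$ behave like $i\gw_k+\eps\lambda_j$, where $\lambda_j$ are the eigenvalues of $P_s(i\gw_k)C_k$ (with $P_s$ the pre-stabilized plant). Hence if $P_s(i\gw_k)C_k$ has an eigenvalue with positive real part, the closed loop is unstable for \emph{every} small $\eps>0$, no matter how small. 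This is precisely why the construction in Theorem~\ref{thm:ContDesignStab} does not stop at fixing the range: it writes $C_k=H_kD_k$ as in \eqref{eq:Ck} and uses the remaining freedom in the invertible factor $D_k$ to force the eigenvalues of \eqref{eq:Dk} (with $P$ replaced by $P_s$, cf.\ Remark~\ref{rem:ContDesign}) to be zero with trivial Jordan blocks or to have negative real parts; Lemma~\ref{lem:LocalBound} and the decomposition \eqref{eq:Decomposition} in Theorem~\ref{thm:DesignParametere} then deliver the uniform bound near $i\gw_k$ that replaces your small-gain step, and Theorem~\ref{thm:ContDesignUnstab} assembles the two-stage controller. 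The hypotheses $r=p$ and $\rank(P(i\gw_k))=p$ enter not in the perturbation argument itself but in guaranteeing that $\mc{K}_k\cap\ker(P_s(i\gw_k))=\{0\}$ holds automatically and that a suitable $D_k$ exists; so your attribution of their role, as well as the stability claim, needs to be corrected along these lines.
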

\begin{proof}
Part (iii)
is justified by the construction presented in Section~\ref{sec:ContDesign}.
To prove~(i), let $k\in \List{q}$, $\sigma_0=\rank(C_{-1}^{(k)})$, and let $x_1,\ldots,x_{\sigma_0}$ be linearly independent columns of $C_{-1}^{(k)}$. We set $\mc{K}'=\Span\{x_1,\ldots,x_{\sigma_0}\}$. Since $C(\cdot)$ is robustly regulating, Theorem \ref{thm:Robchar2} implies that for every $\widetilde{P}(\cdot)\in\Ops$ there exists a vector $h$ such that 
$$
a_k=\widetilde{P}(i\gw_k)C^{(k)}_{-1} h=\widetilde{P}(i\gw_k)\sum_{j=1}^{\sigma_0}\alpha_j x_j.
$$
Thus, $\widetilde{P}^{-1}(i\gw_k)a_k\cap \mc{K}'\neq \emptyset$. It follows that $\sigma_0\geq \sigma_k$. Part (ii) follows by similar arguments since 
Theorem~\ref{thm:Robchar2}(i)
%\eqref{eq:Regulationcondition2}
implies that there exists 
$h\in \C^{pq_k}$
such that
$$
\begin{bmatrix}
C_{-1}^{(k)} & C_{-2}^{(k)} & \cdots & C_{-q_k}^{(k)}
\end{bmatrix}h=\widetilde{P}^{-1}(i\gw_k)a_k.
$$
\end{proof}

Note that
if $\widetilde{P}(i\gw_k)$ is invertible for all $\widetilde{P}(\cdot)\in\Ops$, then for all $k\in \List{q}$ we have
$\widetilde{P}^{-1}(i\gw_k)\yrk = \widetilde{P}(i\gw_k)^{-1}\yrk$ and
$$\sigma_k=\dim\left(\Span\setm{\widetilde{P}(i\gw_k)^{-1}\yrk}{\widetilde{P}(\cdot)\in\Ops}\right).
$$
Part (iii) of Theorem~\ref{thm:MinimumRank} in particular implies the following.
%sufficient condition for the solvability of the robust output regulation. 

%Condition~\eqref{eq:Rangecondition} and the controller construction in Section~\ref{sec:ContDesign} imply the following solvability condition for the 
\begin{corollary}\label{cor:solvability}
The robust regulation problem is solvable if
the plant $P(\cdot)$ is stabilizable, $\indim=p$,
%the plant has equal number of inputs and outputs,
$\rank\left(P(i\omega_k)\right)=p$ for all $k\in\List{q}$, and $\yrk\in\ran(\widetilde{P}(i\gw_k))$ for all $\widetilde{P}(\cdot)\in\Ops$.

%If $\rank\left(P(i\omega_k)\right)=p$ for all $k\in\List{q}$, then the robust regulation problem is solvable if and only if the plant $P(\cdot)$ is stabilizable and $\yrk\in\ran(\widetilde{P}(i\gw_k))$ for all $\widetilde{P}\in\Ops$.
\end{corollary}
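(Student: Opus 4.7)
The plan is to reduce the claim directly to Theorem~\ref{thm:MinimumRank}(iii). All hypotheses of that part --- stabilizability of $P(\cdot)$, $\indim=p$, and $\rank(P(i\gw_k))=p$ for each $k\in\List{q}$ --- appear verbatim among the hypotheses of the corollary, so the bulk of the work is already done there. The only nontrivial issue is to check that the number $\sigma_k$ featuring in that statement is actually a well-defined nonnegative integer under the present assumptions.

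To see this, I would unfold the definition of $\sigma_k$ in Theorem~\ref{thm:MinimumRank}: it is the minimum dimension of a subspace $\mc{K}_k\subset\C^p$ that intersects the preimage $\widetilde{P}^{-1}(i\gw_k)\yrk$ for every $\widetilde{P}(\cdot)\in\Ops$. For this minimum to be finite, each preimage must be nonempty --- which is exactly the content of the new hypothesis $\yrk\in\ran(\widetilde{P}(i\gw_k))$ for all $\widetilde{P}(\cdot)\in\Ops$. With nonemptiness in hand, the trivial choice $\mc{K}_k=\C^p$ is admissible for every $k$, so $0\leq \sigma_k\leq p$.

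Once $\sigma_k$ is known to exist as an integer in $\{0,\dots,p\}$, I would invoke Theorem~\ref{thm:MinimumRank}(iii) to produce a controller of the form~\eqref{eq:Controller} with simple poles ($q_k=1$) and $\rank(C^{(k)}_{-1})=\sigma_k$ that stabilizes $P(\cdot)$ and regulates every $\widetilde{P}(\cdot)\in\Ops$; by definition this solves the robust output regulation problem. The expected main obstacle is essentially absent at this stage: the genuine work --- the explicit construction realizing the bounds $\sigma_k$ --- is encapsulated in Theorem~\ref{thm:MinimumRank}(iii) and carried out in Section~\ref{sec:ContDesign}. The corollary's role is merely to isolate a transparent, checkable sufficient condition, namely the range condition on $\yrk$, under which that construction applies.
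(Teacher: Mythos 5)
Your proposal is correct and matches the paper's own reasoning: the paper derives the corollary by simply noting that Theorem~\ref{thm:MinimumRank}(iii) (whose construction in Section~\ref{sec:ContDesign} needs $\yrk\in\ran(\widetilde{P}(i\gw_k))$ so that the preimages, and hence the subspaces $\mc{K}_k$ and the numbers $\sigma_k$, exist) applies under exactly these hypotheses. Your added remark that the range condition is precisely what makes $\sigma_k$ well defined, with $\mc{K}_k=\C^p$ always admissible, is a faithful and slightly more explicit rendering of the same argument.
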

%\begin{proof}
%  The necessity follows by the condition \eqref{eq:Rangecondition}. 
%The
% sufficiency 
%claim
%follows from the controller construction in Section~\ref{sec:ContDesign}. In particular, see Remark \ref{rem:ContDesign}
%\end{proof}

%The input to output effect of the internal model is characterized by transmission zeros. 
In \cite{LaaPau16} it was shown that the regulation condition \eqref{eq:Rangecondition} implies 
\eqn{
\label{eq:Robcharlim}
\lim_{s\to i\gw_k} (I-\widetilde{P}(s)C(s))\inv \yrk=0.
}
%If $\indim=p$, 
This means that there exists a transmission zero at $i\omega_k$ blocking the pole of the reference signal. If \eqref{eq:Robcharlim} is satisfied, we say that $(I-\widetilde{P}(s)C(s))\inv$ 
%``has a transmission zero in the direction $\yrk$''
\keyterm{has a transmission zero in the direction $\yrk$}. The aim of the robust regulation is to find a controller that aligns the direction of the transmission zero with the reference signal for every plant in $\Ops$.
%This is achieved by building an internal model of large enough order into the controller. A way of doing this is proposed in Section \ref{sec:ContDesign}. 
An internal model of $i\gw_k$ of order $\sigma_0$ introduces transmission zeros in $\sigma_0$ 
%overlapping transmission zeros.
linearly independent directions. In the extreme case $\sigma_0=p$ there is a blocking zero, meaning that the whole transfer function vanishes at $i\omega_k$. This is what is required in classical robust regulation.

\begin{remark}
When considering 
multiple reference signals, the conditions~\eqref{eq:Regulationcondition} are required to be satisfied for each signal.
%  more than one reference signal, then the
%  conditions~\eqref{eq:Regulationcondition}
%  % and \eqref{eq:Rangecondition} 
%  are required for each of them separately. 
For example, if we have an additional second reference signal
$$
\yrefhat'=\sum\limits_{k=1}^{q}\frac{b_k}{s-i\omega_k},
$$
%the conditions in Theorem~\ref{thm:}
then~\eqref{eq:Regulationcondition} for $k\in \List{q}$
are required to be solvable also when $a_k$ is replaced by $b_k$. 
%is required to be solvable or~\eqref{eq:Rangecondition} need to be satisfied also with 
%with $b_k$ in place of $a_k$.
%then \eqref{eq:Rangecondition} is replaced by the condition
%$
%\{a_k,b_k\}\subset \ran(\widetilde{P}(i\omega_k)).
%$
%This naturally increases the size of the internal model.
\end{remark}

\section{Controller Design}
\label{sec:ContDesign}

%In this section we construct a robust controller for a possibly unstable system.
In this section we propose robust controller design method involving two stages. First stage consists of finding a stabilizing controller for the given nominal plant, and in the second stage we construct a robustly regulating controller for the stabilized plant.
%Combining the two controllers results in a robustly regulating controller for the nominal plant. 
We will first show that if the plant is stable, then
%robust regulation can be achieved with a simple controller.
we can construct a simple robust controller. 
The design procedure for unstable plants is presented in Section~\ref{sec:Unstable}.

\subsection{Controller Design for a Stable $P(\cdot)$}
\label{sec:Stable}

The following theorem presenting a robust controller for a stable system is the main result of this section.

%The next theorem presents a design procedure for a robustly regulating controller. 
%The simple controller structure is sufficient because the nominal plant is stable. 
%The additional part $C_0(\cdot)$ of the controller~\eqref{eq:Controller} is required to stabilize an initially unstable plant, as shown in Section~\ref{sec:Unstable}.

\begin{theorem}\label{thm:ContDesignStab}
Assume $P(\cdot)$ is stable and $\yrk\in\ran{(\widetilde{P}(i\gw_k))}$ for all $\widetilde{P}(\cdot)\in\Ops$ and $k\in\List{q}$. 
Then the controller
\eqn{\label{eq:ContSimplePole2}
C(s)=\eps \sum_{k=1}^\infty \frac{C_k}{s-i\gw_k}
}
solves the robust output regulation problem for a class $\Ops$ of perturbations if
the design parameters 
$C_k$ and $\epsilon>0$ are chosen in the following way:
\begin{enumerate}
\item Find a subspace $\mc{K}_k$ such that $\widetilde{P}^{-1}(i\gw_k)\yrk\cap \mc{K}_k\neq \emptyset$ and $\mc{K}_k\cap \ker(P(i\omega_k))=\{0\}$.
\item Choose a basis $\{h_1,\ldots,h_{p_k}\}$ of $\mc{K}_k$.
\item Define
%  \ieq{
%  \label{eq:Hk}
    $H_k:=[h_1,\ldots,h_{p_k},0,\ldots,0]$.
%  }
\item Choose 
  an invertible
  $D_k\in\C^{p\times p}$ so that the eigenvalues
  of 
\begin{align}\label{eq:Dk}
P(i\gw_k)H_k D_k
\end{align}
are zero or have negative real parts, and that the Jordan blocks related to the zero eigenvalue are trivial.
\item Set
\begin{align}\label{eq:Ck}
C_k:=H_k D_k.
\end{align}
\item Choose sufficiently small $\epsilon>0$ to guarantee closed-loop stability.
\end{enumerate}
\end{theorem}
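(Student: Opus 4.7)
The plan is to verify the two requirements of the robust output regulation problem separately: closed-loop stability of the nominal plant $P(\cdot)$ for sufficiently small $\epsilon>0$, and the regulation condition for every $\widetilde{P}(\cdot) \in \Ops$ that is stabilized by $C(\cdot)$.

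The regulation condition is the easy step. Since the controller has only simple poles, Theorem~\ref{thm:Robchar2}(ii) together with the $q_k=1$ reduction to \eqref{eq:Rangecondition} turns the regulation requirement into $\yrk \in \ran(\widetilde{P}(i\gw_k) C_k)$ for every $\widetilde{P}(\cdot)\in\Ops$ and every $k\in\List{q}$. The invertibility of $D_k$ and the identity $\ran(H_k) = \mc{K}_k$ give $\ran(\widetilde{P}(i\gw_k) C_k) = \widetilde{P}(i\gw_k) \mc{K}_k$, so the inclusion is exactly the step~(1) hypothesis $\widetilde{P}\inv(i\gw_k) \yrk \cap \mc{K}_k \neq \emptyset$.

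For closed-loop stability I would cover $\overline{\C_+}$ by a far region $\overline{\C_+} \setminus \bigcup_k B_\delta(i\gw_k)$ and small disks $B_\delta(i\gw_k)$ for some small $\delta>0$. On the far region, the rescaled controller $C/\epsilon = \sum_k C_k/(s-i\gw_k)$ is analytic in $\C_+$ and uniformly bounded, so for $\epsilon$ sufficiently small a Neumann series argument yields $(I - PC)\inv$ bounded and analytic there. On each disk, I would introduce the analytic matrix $R(s) := (s-i\gw_k)(I - P(s)C(s))$, for which $R(i\gw_k) = -\epsilon P(i\gw_k) C_k = -\epsilon P(i\gw_k) H_k D_k$. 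A standard eigenvalue perturbation argument applied to the zeros of $\det R(s)$ in $B_\delta(i\gw_k)$ then shows that the branches emerging from the nonzero eigenvalues of $P(i\gw_k) C_k$ lie in $\{\re s < 0\}$ for small $\epsilon$, thanks to the design of $D_k$ in step~(4).

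The crux is controlling the remaining $p-p_k$ zeros of $\det R(s)$ at $s=i\gw_k$, which come from the zero eigenvalues of $P(i\gw_k) C_k$. The step~(1) condition $\mc{K}_k \cap \ker(P(i\gw_k)) = \{0\}$ gives $\ker(P(i\gw_k) C_k) = \ker(C_k)$, and combined with the trivial-Jordan-block hypothesis on the zero eigenvalue this yields the direct-sum decomposition $\C^p = \ker(C_k) \oplus \ran(P(i\gw_k) C_k)$. In a basis adapted to this splitting, $R(s)$ has a $2\times 2$ block structure in which the first $p-p_k$ columns share a factor $(s-i\gw_k)$, because $C_k$ annihilates the corresponding basis vectors so that $P(s) C_k$ vanishes identically in $s$ on the first block column. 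A Schur-complement inversion using the invertibility of the $(2,2)$ block (equal to $-\epsilon\Lambda_0$ at $s=i\gw_k$, with $\Lambda_0$ the Hurwitz restriction of $P(i\gw_k) C_k$ to its range) then shows that $(s-i\gw_k) R(s)\inv$ is bounded and analytic on $B_\delta(i\gw_k)$. The same block structure absorbs the $1/(s-i\gw_k)$ singularity of $C$ against the $(s-i\gw_k)$ factor in the first block column of $R(s)\inv$, yielding $C(I - PC)\inv \in \mc{M}(\Hinf)$, and patching the two regions gives $H(P,C) \in \mc{M}(\Hinf)$. The main obstacle is precisely this cancellation: without the trivial-Jordan-block assumption in step~(4), the pole order of $R(s)\inv$ at $i\gw_k$ would exceed $1$ and the prefactor $(s-i\gw_k)$ would not suffice to remove it.
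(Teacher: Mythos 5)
Your proposal is correct in substance and follows the same overall two-part architecture as the paper: the regulation part is identical to Lemma~\ref{lem:DesignParameterCk} (Theorem~\ref{thm:Robchar2} with $q_k=1$ reduces everything to \eqref{eq:Rangecondition}, and $\ran(C_k)=\mc{K}_k$ because $D_k$ is invertible), and the stability part splits $\C_+$ into fixed neighbourhoods of the points $i\gw_k$ plus a far region handled by a Neumann series. Where you genuinely diverge is the local analysis near each $i\gw_k$. The paper (Lemma~\ref{lem:LocalBound} and Theorem~\ref{thm:DesignParametere}) never localizes the zeros of a determinant: it proves, via the Jordan form of $P(i\gw_k)C_k$, a bound on $\bigl(I-\frac{\eps}{s-i\gw_k}P(i\gw_k)C_k\bigr)^{-1}$ that is uniform in both $s\in\C_+$ and $\eps>0$, feeds it into the factorization \eqref{eq:Decomposition} to get stability of $(I-PC)^{-1}$ by a second small-gain step, and then removes the potential pole of $C(I-PC)^{-1}$ at $i\gw_k$ using the resolvent expansion of \cite{Roth81}, where the key cancellation is $C_kE=0$ with $E$ the projection onto $\ker(P(i\gw_k)C_k)=\ker(C_k)$. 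You instead work with $R(s)=(s-i\gw_k)(I-P(s)C(s))$, locate the zeros of $\det R$ by a Rouch\'e/eigenvalue-perturbation argument (the nonzero eigenvalues migrate into the open left half-plane at scale $\eps$, the zero eigenvalue stays pinned at $i\gw_k$ with multiplicity exactly $p-p_k$ thanks to semisimplicity and $\ker(P(i\gw_k)C_k)=\ker(C_k)$), and invert through an explicit block factorization adapted to $\C^p=\ker(C_k)\oplus\ran(P(i\gw_k)C_k)$; the cancellation $C_k\cdot\ker$-block $=0$ is the same mechanism as the paper's $C_kE=0$, realized by hand instead of via \cite{Roth81}. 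Your route buys a very concrete picture of where the closed-loop poles go and avoids needing the uniform-in-$\eps$ resolvent bound; the paper's route avoids root counting altogether and is shorter. Two small points to tighten: the claim that $(s-i\gw_k)R(s)^{-1}=(I-PC)^{-1}$ is ``bounded and analytic on $B_\delta(i\gw_k)$'' cannot be literally true, since the migrated zeros of $\det R$ at roughly $i\gw_k+\eps\lambda_j$ lie inside $B_\delta$ and are genuine poles; what your own root-location step gives, and what you actually need, is boundedness on $B_\delta\cap\C_+$ (the Schur-complement inversion alone only controls an $\eps$-dependent neighbourhood of $i\gw_k$, so the two ingredients must be combined). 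Also state explicitly that the remaining entries $(I-PC)^{-1}P$ and $I+C(I-PC)^{-1}P$ of $H(P,C)$ are stable because $P(\cdot)$ itself is stable, as the paper does.
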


The proof of Theorem \ref{thm:ContDesignStab} is divided into parts. Lemma~\ref{lem:DesignParameterCk} shows that the proposed controller is regulating and Theorem~\ref{thm:DesignParametere} shows that with the choices made there exists a small enough $\epsilon>0$ guaranteeing closed-loop stability.

Before proceeding further we discuss the choice of $\mc{K}_k$ in the first step of the design procedure. This is the only step that can fail, since such a subspace might not exist. The condition $\widetilde{P}^{-1}(i\gw_k)\yrk\cap \mc{K}_k\neq \emptyset$ is required for the regulation condition, but the stability-related condition $\mc{K}_k\cap \ker(P(i\omega_k))=\{0\}$ is not automatically satisfied if $P(i\omega_k)$ is not injective. This can happen for example if the plant has transmission zeros at $i\omega_k$. % or there are less inputs than outputs. 
%In the classical robust regulation i
It is well known that
in order to stabilize the nominal plant with a controller containing a full internal model the plant must not have transmission zeros at the poles of the reference signal \cite{LaaPoh15}. Indeed, $\mc{K}_k\cap \ker(P(i\omega_k))\neq\{0\}$ if $\mc{K}_k=\C^p$ and $\ker\left(P(i\omega)\right)\neq 0$. In our case, $P(\cdot)$ can have transmission zeros % or less inputs than outputs, 
since $\mc{K}_k$ need not in general
be equal to $\C^p$.

The choice $\mc{K}_k$ achieving the minimal order internal model exists if $P(i\omega_k)$ has full rank and has the same number of inputs and outputs since the condition $\mc{K}_k\cap \ker(P(i\omega_k))=\{0\}$ is then trivially satisfied. %(this is item (iii) of Theorem \ref{thm:MinimumRank}). 
A particular choice 
%for $\mc{K}_k$ would be
would be $\mc{K}_k = \mc{V}_k$ where
\eq{
%  \label{eq:Vk}
  \mc{V}_k=\Span\setm{\widetilde{P}^+(i\gw_k)\yrk}{\widetilde{P}(\cdot)\in\Ops}
}
with $p_k:=\dim(\mc{V}_k)$.
In this case we require in addition that
$\mc{V}_k\cap \ker(P(i\omega_k))=\{0\}$, or equivalently, $\dim(P(i\omega_k)\mc{V}_k)=p_k$~\cite{LaaPau16}.
In general, choosing $\mc{K}_k=\mc{V}_k$ is not optimal, since $p_k$ may be strictly greater than the minimal order $\sigma_k$ of the internal model related to the pole $i\gw_k$. However, if 
%However,
$\widetilde{P}(i\gw_k)$ 
are invertible for all $\widetilde{P}(\cdot)\in\Ops$, then
item (i) of Theorem \ref{thm:MinimumRank} implies that $\mc{V}_k$ is an optimal choice.

\begin{lemma}\label{lem:DesignParameterCk}
Let $P(\cdot)$ be stable and assume that $\yrk\in\ran({\widetilde{P}(i\gw_k)})$ for all $\widetilde{P}(\cdot)\in\Ops$. If $C_k$ of \eqref{eq:ContSimplePole2} are as in \eqref{eq:Ck}, then the condition \eqref{eq:Rangecondition} holds for every $\widetilde{P}(\cdot)\in\Ops$.
\end{lemma}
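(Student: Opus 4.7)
The plan is to verify condition~\eqref{eq:Rangecondition} by unpacking the constructions in steps~(1)--(5) and tracking how the subspace $\mc{K}_k$, the basis matrix $H_k$, and the invertible matrix $D_k$ interact. Note that for the controller~\eqref{eq:ContSimplePole2}, the residue at $i\omega_k$ is $C^{(k)}_{-1} = \epsilon C_k$, so since $\epsilon > 0$ we have $\ran(\widetilde{P}(i\omega_k) C^{(k)}_{-1}) = \ran(\widetilde{P}(i\omega_k) C_k)$, and it suffices to show $a_k \in \ran(\widetilde{P}(i\omega_k) C_k) = \ran(\widetilde{P}(i\omega_k) H_k D_k)$.

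First I would fix an arbitrary $\widetilde{P}(\cdot)\in \Ops$ and a fixed $k\in\List{q}$. By the hypothesis $a_k \in \ran(\widetilde{P}(i\omega_k))$, the preimage $\widetilde{P}^{-1}(i\omega_k) a_k$ is nonempty, so the subspace $\mc{K}_k$ chosen in step~(1) is well-defined, and by its defining property there exists a vector $v \in \mc{K}_k$ with $\widetilde{P}(i\omega_k) v = a_k$. Because $\{h_1,\ldots,h_{p_k}\}$ is a basis of $\mc{K}_k$ by step~(2), we can write $v = \sum_{j=1}^{p_k} \alpha_j h_j$ for some scalars $\alpha_j\in \C$. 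Setting $w = (\alpha_1,\ldots,\alpha_{p_k},0,\ldots,0)^T \in \C^p$ and recalling the definition $H_k = [h_1,\ldots,h_{p_k},0,\ldots,0]$ from step~(3), we have $v = H_k w$, hence $\widetilde{P}(i\omega_k) H_k w = a_k$, proving $a_k \in \ran(\widetilde{P}(i\omega_k) H_k)$.

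To bring $D_k$ into the picture, I would invoke the invertibility of $D_k$ ensured by step~(4). If $A$ is any matrix and $D$ is invertible of the appropriate size, then $\ran(AD) = A(\ran D) = A(\C^p) = \ran(A)$; applying this with $A = \widetilde{P}(i\omega_k) H_k$ and $D = D_k$ gives $\ran(\widetilde{P}(i\omega_k) H_k D_k) = \ran(\widetilde{P}(i\omega_k) H_k)$. Combining with the previous step yields $a_k \in \ran(\widetilde{P}(i\omega_k) H_k D_k) = \ran(\widetilde{P}(i\omega_k) C_k) = \ran(\widetilde{P}(i\omega_k) C^{(k)}_{-1})$, which is exactly~\eqref{eq:Rangecondition}. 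Since $\widetilde{P}(\cdot)\in\Ops$ and $k$ were arbitrary, the claim follows.

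The proof is essentially a direct definition-chase, so I do not anticipate any real obstacle; the one point that deserves a brief remark is that the invertibility of $D_k$ from step~(4) is the reason the auxiliary stabilization requirement on the eigenvalues of $P(i\omega_k) H_k D_k$ does not interfere with the range computation. Stability of $P(\cdot)$ itself plays no role in this lemma and is only needed later for the $\epsilon$-smallness argument of Theorem~\ref{thm:DesignParametere}.
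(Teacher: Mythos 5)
Your proof is correct and follows essentially the same route as the paper's: both use the fact that $\ran(C_k)=\ran(H_kD_k)=\mc{K}_k$ (invertibility of $D_k$) together with the step-(1) condition $\widetilde{P}^{-1}(i\gw_k)\yrk\cap\mc{K}_k\neq\emptyset$ to produce a vector mapped by $\widetilde{P}(i\gw_k)C_k$ onto $\yrk$. Your explicit handling of the factor $\eps$ in $C^{(k)}_{-1}=\eps C_k$ is a minor detail the paper leaves implicit, but it does not change the argument.
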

\begin{proof}
  Let $\widetilde{P}(\cdot)\in\Ops$ and $k\in \List{q}$ be arbitrary.
%We show that \eqref{eq:Rangecondition} holds for an arbitrary $\widetilde{P}\in\Ops$.
Since $\ran(C_k)=\mc{K}_k$ and $\widetilde{P}^{-1}(i\gw_k)\yrk\cap \mc{K}_k\neq \emptyset$ there exists $y$ such that
$
C_k y\in \widetilde{P}^{-1}(i\gw_k) \yrk.
$
It follows that $\yrk=\widetilde{P}(i\gw_k)C_k y$
and thus~\eqref{eq:Rangecondition} holds.
\end{proof}

\begin{lemma}\label{lem:LocalBound}
Let $k\in\List{q}$ be fixed and $P(\cdot)$ be stable.
%, and assume that %$\yrk\in\ran({\widetilde{P}(i\gw_k)})$ for all $\widetilde{P}(\cdot)\in\Ops$. 
If $C_k$ of \eqref{eq:ContSimplePole2} are as in \eqref{eq:Ck}, 
%then $(I-\frac{\eps}{s-i\gw_k} P(i\gw_k)C_k)^{-1}$ is bounded above in $\C_+$ by a bound independent of $\eps>0$.
then there exists $M\geq 0 $ such that $I-\frac{\eps}{s-i\gw_k} P(i\gw_k)C_k$ are nonsingular and $\norm{(I-\frac{\eps}{s-i\gw_k} P(i\gw_k)C_k)^{-1}}\leq M$
for all $s\in \C_+$ and $\eps>0$.
%are uniformly bounded for  in $\C_+$ by a bound independent of $\eps>0$.
\end{lemma}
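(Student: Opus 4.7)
The plan is to introduce $\lambda := \eps/(s-i\gw_k)$ and show that, as $s$ ranges over $\C_+$ and $\eps > 0$, $\lambda$ ranges only over the open right half plane $\C_+$. Indeed, $\re\bigl(1/(s-i\gw_k)\bigr) = \re(s)/\abs{s-i\gw_k}^2 > 0$ whenever $\re(s) > 0$, and multiplying by $\eps > 0$ preserves this. Hence it suffices to prove a uniform bound $\Norm{(I-\lambda A)^{-1}} \le M$ for all $\lambda$ with $\re(\lambda) > 0$, where $A := P(i\gw_k)C_k$.

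The next step is to put $A$ into Jordan form. By the choice of $D_k$ in step~(iv), the matrix $A$ has spectrum contained in $\{0\} \cup \setm{\mu\in\C}{\re(\mu)<0}$, and the Jordan blocks at $0$ are trivial. Thus there is an invertible $S$ and an invertible block $B$ (whose spectrum lies strictly in the open left half plane) such that
\begin{align*}
A = S\begin{bmatrix} 0 & 0 \\ 0 & B \end{bmatrix} S^{-1},
\qquad
(I-\lambda A)^{-1} = S\begin{bmatrix} I & 0 \\ 0 & (I-\lambda B)^{-1} \end{bmatrix}S^{-1},
\end{align*}
provided $I-\lambda B$ is invertible. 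So the task reduces to bounding $\Norm{(I-\lambda B)^{-1}}$ uniformly over $\lambda \in \C_+$.

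For this, I would substitute $\nu := 1/\lambda$ (again with $\re(\nu) > 0$) and rewrite
\begin{align*}
(I-\lambda B)^{-1} = -\nu\,(B-\nu I)^{-1}.
\end{align*}
Since every eigenvalue of $B$ has strictly negative real part, $B-\nu I$ is invertible for every $\nu$ in the closed right half plane $\closure{\C_+}$, so the map $\nu \mapsto -\nu(B-\nu I)^{-1}$ is continuous on $\closure{\C_+}$. As $\abs{\nu}\to\infty$ we have $-\nu(B-\nu I)^{-1} = (I-\nu^{-1}B)^{-1} \to I$, and as $\nu\to 0$ the expression tends to $0$. Therefore this matrix-valued function is continuous on the one-point compactification of $\closure{\C_+}$ and hence uniformly bounded on $\C_+$; denote the bound by $M_B$. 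In particular $I-\lambda B$ is nonsingular for every $\lambda\in\C_+$, which in turn implies that $I-\lambda A$, and consequently $I - \frac{\eps}{s-i\gw_k}P(i\gw_k)C_k$, is nonsingular. Setting $M := \Norm{S}\,\Norm{S^{-1}}\max\{1,M_B\}$ then gives the required uniform bound.

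The only delicate point is the uniform bound in the third step. The potentially troublesome regimes are $\nu \to 0$ (equivalently $\lambda \to \infty$, arising from $s$ close to $i\gw_k$) and $\nu \to \infty$ (equivalently $\lambda \to 0$, arising from large $\eps$ or $s$ far from $i\gw_k$), but the two explicit asymptotics above resolve both, while the trivial Jordan structure at the zero eigenvalue of $A$ is what makes the reduction to the invertible block $B$ possible in the first place.
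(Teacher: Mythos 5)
Your proposal is correct and follows essentially the same route as the paper: after noting that $\eps/(s-i\gw_k)$ stays in the open right half plane, both arguments split $P(i\gw_k)C_k$ via a similarity into the trivial zero part and an invertible block with spectrum in the open left half plane, and then bound $z(zI-J)^{-1}$ (your $-\nu(B-\nu I)^{-1}$) uniformly on the right half plane using continuity up to the imaginary axis and the limit $I$ at infinity. Your treatment of the boundary and of the limit $\nu\to 0$ is, if anything, slightly more explicit than the paper's.
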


\begin{proof}
By the choice of $C_k$, we know that the Jordan blocks of $P(i\gw_k)C_k$ related to the eigenvalue 0 are trivial, and that the non-zero eigenvalues of $P(i\gw_k)C_k$ have negative real parts. This means that there exist a nonsingular matrix $S$ and a matrix $J$ whose eigenvalues have strictly negative real parts such that
$$
P(i\gw_k)C_k=S\begin{bmatrix}
J & 0\\
0 & 0
\end{bmatrix}
S^{-1},
$$
and for all $z\in \C_+$ we have
$$
\left(I-\frac{1}{z}P(i\gw_k)C_k\right)^{-1}=S\begin{bmatrix}z\left(zI-J\right)^{-1} & 0\\
0 & I
\end{bmatrix}S^{-1}.
$$
Since all the eigenvalues of $J$ have negative real parts, $H(z)=z\left(zI-J\right)^{-1}$ is analytic in $\C_+$. In addition, it approaches $I$ as $|z|\to\infty$. % with $z\in \C_+$. 
Thus, it is uniformly bounded with respect to  $z\in\C_+$. 
It follows that $(I-\frac{\eps}{s-i\gw_k}J)^{-1}$ is uniformly bounded with respect to $s\in\C_+$ and $\eps>0$ since $\setm{\frac{1}{z}}{ z\in \C_+}=\setm{\frac{\eps}{s-i\gw_k}}{ s\in\C_+, \eps>0}$.
%This shows the claim. 
%Thus, $(I-\frac{\eps}{s-i\gw_k} P(i\gw_k)C_k)^{-1}$ is bounded in $\C_+$ by a bound independent of $\eps>0$.
\end{proof}

\begin{theorem}\label{thm:DesignParametere}
Let $P(\cdot)$ be stable. 
% and assume that $\yrk\in\ran({\widetilde{P}(i\gw_k)})$ for all $\widetilde{P}\in\Ops$. 
If $C_k$ of \eqref{eq:ContSimplePole2} are as in \eqref{eq:Ck}, then there exists $\eps^*>0$ such that $C(\cdot)$ of \eqref{eq:ContSimplePole2} stabilizes $P(\cdot)$ for every $\eps\in(0,\eps^*]$.
\end{theorem}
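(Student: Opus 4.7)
The plan is to show that for sufficiently small $\eps>0$ every entry of $H(P,C)$ belongs to $\mc{M}(\Hinf)$; the main engine is Lemma~\ref{lem:LocalBound} combined with a local--global Neumann series perturbation argument. First I would split $\overline{\C_+}$ into disjoint closed neighborhoods $U_1,\dots,U_q$ of $i\gw_1,\dots,i\gw_q$ and the complement $V := \overline{\C_+}\setminus\bigcup_{k}U_k^{\circ}$ (chosen so that $V$ contains a neighborhood of infinity), and estimate $(I-P(\cdot)C(\cdot))^{-1}$ on each piece separately.

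On $U_k$ I would expand
\begin{align*}
P(s)C(s) &= \frac{\eps\, P(i\gw_k)C_k}{s-i\gw_k} + \eps R_k(s), \\
R_k(s) &:= \frac{P(s)-P(i\gw_k)}{s-i\gw_k}C_k + P(s)\sum_{j\neq k}\frac{C_j}{s-i\gw_j},
\end{align*}
where $R_k$ is analytic and bounded on $U_k$ by some $N_k$. Writing $A_k(s) := I - \eps P(i\gw_k)C_k/(s-i\gw_k)$, I would use the factorization
\begin{align*}
I - P(s)C(s) = \bigl(I - \eps R_k(s) A_k(s)^{-1}\bigr) A_k(s).
\end{align*}
Lemma~\ref{lem:LocalBound} gives $\|A_k(s)^{-1}\|\leq M$ uniformly in $\eps>0$ and $s\in\overline{\C_+}$, so for $\eps M N_k<1$ the left factor is invertible by Neumann series, with a uniform bound on $U_k$. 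On $V$, the rational function $C_0(s):=\sum_k C_k/(s-i\gw_k)$ is bounded, whence $\|\eps P(s)C_0(s)\|\leq \eps K$ for some $K$, and Neumann series again bounds $(I-PC)^{-1}$ for $\eps<1/K$. Taking $\eps^*$ so small that all bounds hold simultaneously yields $(I-PC)^{-1}\in\mc{M}(\Hinf)$, with analyticity on $\C_+$ inherited from the pointwise formulas.

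For the remaining entries of $H(P,C)$, the identities $(I-PC)^{-1}=I+PC(I-PC)^{-1}$ and $(I-PC)^{-1}P = P(I-CP)^{-1}$ immediately place $PC(I-PC)^{-1}$ and $(I-PC)^{-1}P$ in $\mc{M}(\Hinf)$. The hard part will be $C(I-PC)^{-1}$, whose potential poles at $i\gw_k$ must cancel. Rewriting the factorization as $(I-PC)^{-1}=A_k(s)^{-1}\bigl(I-\eps R_k A_k^{-1}\bigr)^{-1}$, I would show that the singularity of $C$ is absorbed by $A_k^{-1}$ on the left. The spectral decomposition $P(i\gw_k)C_k=S\diag(J,0)S^{-1}$ from the proof of Lemma~\ref{lem:LocalBound} gives
\begin{align*}
A_k(s)^{-1}=S\diag\bigl((s-i\gw_k)((s-i\gw_k)I-\eps J)^{-1},\,I\bigr)S^{-1},
\end{align*}
which at $s=i\gw_k$ equals the projection $S\diag(0,I)S^{-1}$ onto $\ker(P(i\gw_k)C_k)$. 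The design conditions $\ran(C_k)\subset\mc{K}_k$ and $\mc{K}_k\cap\ker(P(i\gw_k))=\{0\}$ force $\ker(P(i\gw_k)C_k)=\ker(C_k)$, hence $C_k A_k(i\gw_k)^{-1}=0$; this makes $C(s)A_k(s)^{-1}$ analytic across $i\gw_k$, so $C(I-PC)^{-1}=CA_k^{-1}(I-\eps R_k A_k^{-1})^{-1}$ is bounded on $U_k$. Combined with the $V$-estimate and the vanishing of $C(s)$ at infinity, this yields $C(I-PC)^{-1}\in\mc{M}(\Hinf)$. This pole-cancellation step is the main obstacle I anticipate, and it is precisely where the trivial-Jordan-block condition in step (4) of the construction is used: it ensures that $A_k(s)^{-1}$ remains bounded at $i\gw_k$ rather than blowing up on the zero eigenspace, making the projection picture above available.
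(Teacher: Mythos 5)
Your proposal is correct and follows essentially the same route as the paper: the same local/global splitting of $\overline{\C_+}$, the same factorization of $I-P(s)C(s)$ through $A_k(s)=Q_{1k}(s)^{-1}$ with a Neumann-series bound based on Lemma~\ref{lem:LocalBound}, and the same pole cancellation in $C(I-PC)^{-1}$ via $\ker(P(i\gw_k)C_k)=\ker(C_k)$, hence $C_kE=0$. The only cosmetic difference is that you identify the limiting projection $E$ directly from the Jordan decomposition $P(i\gw_k)C_k=S\diag(J,0)S^{-1}$, whereas the paper invokes Rothblum's resolvent expansion to the same effect.
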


\begin{proof}
%\textit{Proof of Theorem~\ref{thm:DesignParametere}.}
  First we show that $(I-P(\cdot)C(\cdot))^{-1}$ is stable for all sufficiently small $\epsilon>0$. To this end, we choose 
  $$
  \gamma<\min\setm{|i\gw_k-i\gw_l|}{1\leq k<l\leq q}
  $$
and define the half discs 
$\Omega_k:=\C_+\cap\setm{s\in\C}{|s-i\gw_k|<\gamma}.$
Our aim is to show the existence of a constant $\eps'>0$ such that $(I-P(\cdot)C(\cdot))^{-1}$ is bounded in $\C_+\setminus \bigcup_{k=1}^q \Omega_k$ whenever $\eps\in(0,\eps']$, and of $\eps_k>0$ such that $(I-P(\cdot)C(\cdot))^{-1}$ is bounded in $\Omega_k$ whenever $\eps\in(0,\eps_k]$. Then $(I-P(\cdot)C(\cdot))^{-1}$ is stable for all $\eps\in(0,\eps^*]$ where $\eps^*=\min\{\eps',\eps_1\ldots,\eps_q\}$.

By the stability of $P(\cdot)$ and the definition of $C(\cdot)$, $P(\cdot)C(\cdot)$ is bounded in $\C_+\setminus \bigcup_{k=1}^q \Omega_k$. Thus, there exists a small enough $\eps'>0$ such that $(I-P(\cdot)C(\cdot))^{-1}$ is bounded in $\C_+\setminus \bigcup_{k=1}^q \Omega_k$ whenever $\eps\in(0,\eps']$.

Next we show the existence of suitable $\eps_k>0$. We write
\eqn{\label{eq:Decomposition}
(I-P(s)C(s))^{-1}&= Q_{1k}(s)(I- \eps Q_{2k}(s)Q_{1k}(s))^{-1}
}
where we have denoted
\eq{ %\label{eq:DecompositionMatrices}
& Q_{1k}(s)=\left(I-\frac{\eps{P}(i\gw_k)C_k}{s-i\gw_k}\right)^{-1}, \\
& Q_{2k}(s)=\frac{P(s)-P(i\gw_k)}{s-i\gw_k}C_k-P(s)\sum_{l\neq k} \frac{C_l}{s-i\gw_l}.
}
By Lemma \ref{lem:LocalBound}, $Q_{1k}(s)$ is well-defined 
and uniformly bounded with respect to $s\in \Omega_k$ and $\eps>0$.
%and bounded in $\Omega_k$ by a bound independent of $\eps$. 
In addition, $Q_{2k}(\cdot)$ is bounded in $\Omega_k$ since $P(\cdot)$ and
%$\sum_{\substack{l\neq k\\1\leq l\leq q}}\frac{C_l}{s-i\gw_l}$
$\sum_{l\neq k}\frac{C_l}{s-i\gw_l}$
are analytic in $\Omega_k$. 
The decomposition \eqref{eq:Decomposition} implies that we can choose $\eps_k>0$ such that $(I-P(\cdot)C(\cdot))^{-1}$ is bounded in $\Omega_k$ for all $\eps\in(0,\eps_k]$. This completes the proof of the stability of $(I-P(\cdot)C(\cdot))^{-1}$.

Since $P(\cdot)$ is stable, it remains to show the stability of $C(\cdot)(I-P(\cdot)C(\cdot))^{-1}$. By the stability of $(I-P(\cdot)C(\cdot))^{-1}$ and the decomposition \eqref{eq:Decomposition}, we only need to show that
\eq{
  H(s):= \hspace{-.4ex}\frac{\eps}{s-i\gw_k} C_k Q_{1k}(s)
  =C_k \hspace{-.4ex}\left[\frac{s-i\gw_k}{\eps}I- P(i\gw_k)C_k\right]^{-1}
}
does not have pole at $i\gw_k$.
Since $H(s)$ can only have poles of order one, it has the representation
\eq{
%  \label{eq:LaurentQ1}
H(s)&=C_k\left(\frac{\eps}{s-i\gw_k}E+F_1(s)\right),
}
where $E$ is the projection to $\ker(P(i\gw_k)C_k)$ along $\ran(P(i\gw_k)C_k)$ and $F_1(s)$ is an analytic function \cite{Roth81}. Since $\mc{K}_k\cap \ker(P(i\omega_k))=\{0\}$ and $\ran(C_k)=\mc{K}_k$ we have that $\ker(P(i\gw_k)C_k)=\ker(C_k)$. Consequently, $C_k E=0$, and $H(s)$ is analytic at $i\gw_k$. This completes the proof. 
%\hfill $\square$%$\blacksquare$
\end{proof}

\subsection{Controller Design for an Unstable $P(\cdot)$}
\label{sec:Unstable}

For unstable plants, the design procedure is given in the following theorem. 
It is based on the two stage approach proposed in \cite[Section 5.3]{Vid85}.

\begin{theorem}\label{thm:ContDesignUnstab}
If the steps of items 1 and 2 below can be carried out, then the controller of step 3 is robustly regulating.
\begin{enumerate}
\item 
Stabilize the nominal plant $P(\cdot)$ using a controller $C_s(\cdot)$ that does not have poles at $i\gw_k$ for $k\in \List{q}$.
%  Stabilize the nominal plant $P(\cdot)$ using a stabilizing controller $C_s(\cdot)$ that does not possess poles at $i\gw_k$ for $k=1,\ldots,q$.
\item Find a controller $C_r(\cdot)$ of form \eqref{eq:Controller} that stabilizes 
  \eq{
%  \label{eq:StabP}
    P_s(\cdot):=P(\cdot)\left(I-C_s(\cdot)P(\cdot)\right)^{-1} %=\left(I-P(\cdot)C_s(\cdot)\right)^{-1}P(\cdot),
  }
and satisfies the condition \eqref{eq:Regulationcondition} for every $\widetilde{P}(\cdot)\in\Ops$.

\item A robustly regulating controller of $P(\cdot)$ is given by
  \eq{
%  \label{eq:RobCont}
    C(\cdot)=C_s(\cdot)+C_r(\cdot).
  }
\end{enumerate}
\end{theorem}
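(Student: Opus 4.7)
The plan is to verify the two defining conditions of the robust output regulation problem for $C = C_s + C_r$ and $P$: namely, closed-loop stability and the regulation condition~\eqref{eq:RegCond} for every $\widetilde{P}(\cdot)\in\Ops$ which is stabilized by $C$.

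For the stability step the starting point is the algebraic factorization
\begin{align*}
I - PC = I - P(C_s + C_r) = (I - PC_s)(I - P_s C_r),
\end{align*}
which follows from the push-through identity $P(I - C_s P)^{-1} = (I - PC_s)^{-1}P$ and hence $(I - PC_s)P_s = P$. Inverting gives $(I - PC)^{-1} = (I - P_s C_r)^{-1}(I - PC_s)^{-1}$. By step~1, all four blocks of $H(P, C_s)$ lie in $\mc{M}(\Hinf)$, so in particular both $(I - PC_s)^{-1}$ and $P_s = (I - PC_s)^{-1}P$ are stable. By step~2, all four blocks of $H(P_s, C_r)$ lie in $\mc{M}(\Hinf)$ as well. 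Each block of $H(P, C)$ can then be rewritten as a product of blocks drawn from $H(P, C_s)$ and $H(P_s, C_r)$, yielding $H(P, C) \in \mc{M}(\Hinf)$. This is the two-stage factorization argument of~\cite[Section~5.3]{Vid85}.

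For the regulation step, let $\widetilde{P}(\cdot) \in \Ops$ be stabilized by $C$. By Theorem~\ref{thm:Robchar2}(i), $C$ regulates $\widetilde{P}$ if and only if~\eqref{eq:Regulationcondition} is solvable for every $k \in \List{q}$, where the $X^{(k)}_n$ are the Laurent coefficients of $I - \widetilde{P}C$ at $i\gw_k$. The key observation is that for $n < 0$ the defining sum $\sum_{m=0}^{q_k + n}\widetilde{P}^{(k)}_m C^{(k)}_{n-m}$ only involves $C^{(k)}_{n-m}$ with $n - m \leq n < 0$, i.e.\ only the principal part of $C$ at $i\gw_k$. Since $C_s(\cdot)$ is analytic at every $i\gw_k$, the principal parts of $C = C_s + C_r$ and of $C_r$ at $i\gw_k$ coincide, so the block Toeplitz matrix on the left of~\eqref{eq:Regulationcondition} is the same whether it is formed from $\widetilde{P}$ and $C$ or from $\widetilde{P}$ and $C_r$. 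The resulting equation is precisely the hypothesis placed on $C_r$ in step~2 and is therefore solvable, so Theorem~\ref{thm:Robchar2}(i) yields that $C$ regulates $\widetilde{P}$.

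The main obstacle is the stability step. The factorization itself is elementary, but expressing all four blocks of $H(P, C)$ as products of blocks from $H(P, C_s)$ and $H(P_s, C_r)$ requires systematic use of the push-through identity. Once that bookkeeping is carried out, the regulation step follows almost immediately from the principal-part observation and Theorem~\ref{thm:Robchar2}(i).
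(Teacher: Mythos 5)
Your proposal is correct and follows essentially the same route as the paper: the regulation step is exactly the paper's argument (since $C_s$ is analytic at each $i\gw_k$, the principal part of $C=C_s+C_r$ there coincides with that of $C_r$, so \eqref{eq:Regulationcondition} is unchanged and Theorem~\ref{thm:Robchar2}(i) applies), while for closed-loop stability the paper simply cites \cite[Theorem 5.3.6]{Vid85}, whose content is precisely the two-stage factorization $I-PC=(I-PC_s)(I-P_sC_r)$ that you sketch. The only minor imprecision is that the blocks of $H(P,C)$ are in general sums of products of blocks of $H(P,C_s)$ and $H(P_s,C_r)$ rather than single products --- e.g. handling $C_s(I-P_sC_r)^{-1}(I-PC_s)^{-1}$ requires push-through identities such as $C_sP_s=(I-C_sP)^{-1}-I$ --- but this is exactly the bookkeeping you acknowledge, and it goes through.
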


\begin{remark}\label{rem:ContDesign}
Step 2 can be completed using the approach for stable plants in Section~\ref{sec:Stable} by choosing
%    if we choose 
the matrices $H_k$ associated to $\Ops$ as before, but 
replacing $P(\cdot)$ by $P_s(\cdot)$ when choosing $D_k$ in~\eqref{eq:Dk}. 
%    in~\eqref{eq:Dk} we replace 
%In Step 2 the approach proposed for stable plants if we choose the matrices $H_k$ implied by $\Ops$ as earlier, but in \eqref{eq:Dk} we replace $P(\cdot)$ by $P_s(\cdot)$ when choosing $D_k$. 
In particular, if the plant $P(\cdot)$ is invertible at $i\omega_k$, then so is the stabilized plant $P_s(\cdot)$, and
thus it is %always 
possible to carry out Step 2.
%It follows that Step 2 of the above design procedure can be carried out.
\end{remark}

%We point out that restricting the pole structure of the controller in Step 1 does not restrict generality and makes the second step of the procedure simpler. The next theorem shows that the proposed controller is robustly regulating.

\textit{Proof of Theorem~\ref{thm:ContDesignUnstab}.}
Theorem 5.3.6 of \cite{Vid85} (the generalization to the current case follows by \cite[Section 8]{Vid85}) shows that $C(\cdot)$ stabilizes $P(\cdot)$ since $C_s(\cdot)$ stabilizes $P(\cdot)$ and $C_r(\cdot)$ stabilizes $P_s(\cdot)$. Thus we only need to show that $C(\cdot)$ is regulating for $\widetilde{P}(\cdot)\in\Ops$. Since $C_s(\cdot)$ does not possess poles at $i\gw_k$ and $C_r(\cdot)$ is of the form \eqref{eq:Controller}, it is obvious that $C(\cdot)$ is of the form \eqref{eq:Controller} as well with the same 
matrices $C_{-n}^{(k)}$. The matrices $C_{-n}^{(k)}$ satisfy the condition \eqref{eq:Regulationcondition} by assumption.
\hfill $\square$ %$\blacksquare$

\section{Example}
\label{sec:example}

Let us consider the laboratory process of Figure~\ref{fig:tanks} with five water tanks. There is an opening in the bottom of each water tank and the water from the tanks four and five flows to the tanks below them. The three pumps with operating voltages $u_j$, $j=1,2,3$, induce a flow $r_j u_j$ where $r_j$ is a constant. 
The outputs $y_l$, $l=1,2,3$, of the plant are defined as the deviations %of the water levels from fixed reference levels, which are chosen to be 
from the initial water levels in 
%Denote the deviation from the initial water levels of Tank
Tanks $l$, $l=1,2,3$, respectively. The initial water level, as well as other properties of the system, are chosen so that no complications such as negative water levels can occur.
% which we assume the be sufficiently high.
%$l$ by $y_l$, $l=1,2,3$.
The aim of our control problem is to choose the inputs $u_j$ so that the output $y(t)=(y_1(t),y_2(t),y_3(t))$ converges to the reference signal
$$
y_{ref}(t)=(\sin(t),1,1),
$$
i.e. the water level of Tank 1 is changing in a periodic manner while kept one unit above the initial level in the other two bottom tanks.

\begin{figure}[h]
\centering
	\setlength{\unitlength}{.25cm}
	\begin{picture}(31, 23)
	
	\thicklines

	% Water reservoir
	\put(2,1.5){\fcolorbox{black!15}{black!15}{\begin{minipage}[t][0.25cm]{7.25cm}\makebox[1cm]{ }\end{minipage}}}
	\put(2,3){\line(0,-1){3}}
	\put(2,0){\line(1,0){30}}
	\put(32,0){\line(0,1){3}}

	% Tank 1
	\put(5,7.5){\fcolorbox{black!15}{black!15}{\begin{minipage}[t][0.5cm]{1cm}\makebox[1cm]{ }\end{minipage}}}
	\put(5,5){\line(0,1){6}}
	\put(10,5){\line(0,1){6}}
	\put(5,5){\line(1,0){2}}
	\put(8,5){\line(1,0){2}}

	% Tank 2
	\put(15,8.5){\fcolorbox{black!15}{black!15}{\begin{minipage}[t][0.75cm]{1cm}\makebox[1cm]{ }\end{minipage}}}
	\put(15,5){\line(0,1){6}}
	\put(20,5){\line(0,1){6}}
	\put(15,5){\line(1,0){2}}
	\put(18,5){\line(1,0){2}}

	% Tank 3
	\put(25,7.5){\fcolorbox{black!15}{black!15}{\begin{minipage}[t][0.5cm]{1cm}\makebox[1cm]{ }\end{minipage}}}
	\put(25,5){\line(0,1){6}}
	\put(30,5){\line(0,1){6}}
	\put(25,5){\line(1,0){2}}
	\put(28,5){\line(1,0){2}}

	% Tank 4
	\put(5,15.5){\fcolorbox{black!15}{black!15}{\begin{minipage}[t][0.25cm]{1cm}\makebox[1cm]{ }\end{minipage}}}
	\put(5,14){\line(0,1){6}}
	\put(10,14){\line(0,1){6}}
	\put(5,14){\line(1,0){2}}
	\put(8,14){\line(1,0){2}}

	% Tank 5
	\put(15,17.5){\fcolorbox{black!15}{black!15}{\begin{minipage}[t][0.75cm]{1cm}\makebox[1cm]{ }\end{minipage}}}
	\put(15,14){\line(0,1){6}}
	\put(20,14){\line(0,1){6}}
	\put(15,14){\line(1,0){2}}
	\put(18,14){\line(1,0){2}}
	
	% Tank 6
%	\put(25,14){\line(0,1){6}}
%	\put(30,14){\line(0,1){6}}
%	\put(25,14){\line(1,0){2}}
%	\put(28,14){\line(1,0){2}}

	\thinlines

	% Water levels	
	\put(2,2){\line(1,0){30}}
	\put(5,8){\line(1,0){5}}
	\put(15,9){\line(1,0){5}}
	\put(25,8){\line(1,0){5}}
	\put(5,16){\line(1,0){5}}
	\put(15,18){\line(1,0){5}}
%	\put(25,17){\line(1,0){5}}

	% Tank numbers
	\put(6.5,9){\makebox(2,2){$1$}}
	\put(16.5,9){\makebox(2,2){$2$}}
	\put(26.5,9){\makebox(2,2){$3$}}
	\put(6.5,18){\makebox(2,2){$4$}}
	\put(16.5,18){\makebox(2,2){$5$}}
%	\put(26.5,18){\makebox(2,2){$6$}}
		
	\small

	% Pump 1 tubing
	\put(3,1){\line(0,1){3}}
	\put(3,6){\line(0,1){5.5}}
	\put(3.5,12){\line(1,0){3}}
	\put(6.5,12){\line(0,-1){6}}
	\put(3,12.5){\line(0,1){9.5}}
	\put(3,22){\line(1,0){13.5}}
	\put(16.5,22){\line(0,-1){7}}

	% Pump 2 tubing
	\put(13,1){\line(0,1){3}}
	\put(13,6){\line(0,1){5.5}}
	\put(13.5,12){\line(1,0){3}}
	\put(16.5,12){\line(0,-1){6}}
	\put(13,12.5){\line(0,1){8.5}}
	\put(13,21){\line(-1,0){4.5}}
	\put(8.5,21){\line(0,-1){6}}

	% Pump 3 tubing
	\put(23,1){\line(0,1){3}}
	\put(23,6){\line(0,1){5.5}}
	\put(23.5,12){\line(1,0){3}}
	\put(26.5,12){\line(0,-1){6}}
	\put(23,12.5){\line(0,1){8.5}}
	\put(23,21){\line(-1,0){4.5}}
	\put(18.5,21){\line(0,-1){6}}

	% Pump and valve 1
	\put(3,5){\circle{2}}
	\put(2,4){\makebox(2,2){$u_1$}}

	\put(3,12){\circle{1.1}}
	\put(0,11){\makebox(2,2){$\gamma_1$}}

	% Pump and valve 2
	\put(13,5){\circle{2}}
	\put(12,4){\makebox(2,2){$u_2$}}
	
	\put(13,12){\circle{1.1}}
	\put(10,11){\makebox(2,2){$\gamma_2$}}

	% Pump and valve 3
	\put(23,5){\circle{2}}
	\put(22,4){\makebox(2,2){$u_3$}}
	
	\put(23,12){\circle{1.1}}
	\put(20,11){\makebox(2,2){$\gamma_3$}}
	
	\end{picture}
\caption{A five tank laboratory process.}
\label{fig:tanks}
\end{figure}
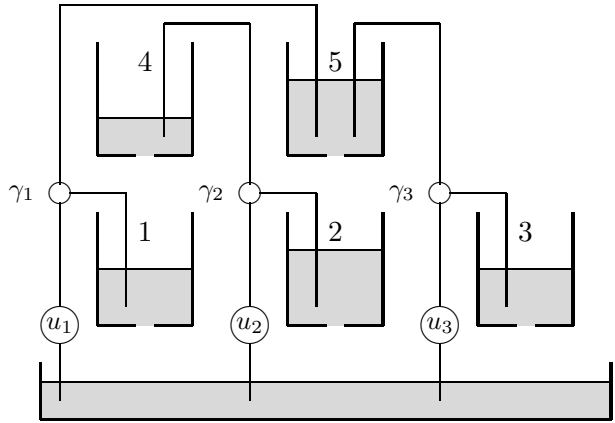

The parameters $0<\gamma_k< 1$ correspond to how the three valves are set prior to the experiment. The flow induced by the first pump to Tank 1 is $\gamma_1 r_1 u_1$ and $(1-\gamma_1) r_1 u_1$ to Tank 5, and similarly for the other two valves. The changes to the valve positions can be considered as perturbations to the system.

The transfer function of the system linearized at the initial water levels is
$$
\widetilde{P}(s)=\begin{bmatrix}
 \frac{\gamma_1 \alpha_{1}}{s+\beta_1} & \frac{(1-\gamma_{2}) \alpha_{2}}{(s+\beta_1)(s+\beta_2)} & 0\\
\frac{(1-\gamma_{1}) \alpha_{3}}{(s+\beta_3)(s+\beta_4)} &  \frac{\gamma_2 \alpha_{4}}{s+\beta_3} & \frac{(1-\gamma_{3}) \alpha_{5}}{(s+\beta_3)(s+\beta_4)}\\
0 & 0 &  \frac{\gamma_3 \alpha_{6}}{s+\beta_5}
\end{bmatrix}
$$
where the parameters $\alpha_j$ and $\beta_l$ depend on the tank cross-sections, the outlet hole cross-sections, constants of proportionality $k_l$, and the initial water levels. For more details, see \cite{Joh00} where a similar system with four tanks was considered.

Here we choose the initial setup so that $\alpha_1=\alpha_2=\alpha_3=\beta_1=\beta_2=\beta_3=1$ and $\alpha_4=\alpha_5=\alpha_6=\beta_4=\beta_5=2$ for simplicity, i.e.
we have
$$
\widetilde{P}(s)=\begin{bmatrix}
 \frac{\gamma_1}{s+1} & \frac{1-\gamma_{2}}{(s+1)^2} & 0\\
\frac{1-\gamma_{1} }{(s+1)(s+2)} &  \frac{2\gamma_2}{s+1} & \frac{2(1-\gamma_{3})}{(s+1)(s+2)}\\
0 & 0 &  \frac{2\gamma_3}{s+2}
\end{bmatrix}
$$
Let the initial positions of the valves be $\gamma_1=\gamma_2=\gamma_3=\frac{1}{2}$, i.e. the nominal plant is
$$
P(s)=\frac{1}{2}\begin{bmatrix}
 \frac{1}{s+1} & \frac{1}{(s+1)^2} & 0\\
\frac{1}{(s+1)(s+2)} &  \frac{2}{s+1} & \frac{2}{(s+1)(s+2)}\\
0 & 0 &  \frac{2}{s+2}
\end{bmatrix}
$$
The frequency domain representation of $y_{ref}(t)$ is
\begin{align*}
\yrefhat(s)& =\frac{1}{s-i}\begin{bmatrix}
\frac{-i}{2}\\0\\0
\end{bmatrix}
+
\frac{1}{s+i}\begin{bmatrix}
\frac{i}{2}\\0\\0
\end{bmatrix}
+
\frac{1}{s}\begin{bmatrix}
0\\1\\1
\end{bmatrix}\\
& =\frac{1}{s-i}a_1
+
\frac{1}{s+i}a_{-1}
+
\frac{1}{s}a_0.
\end{align*}

In order to
solve the robust regulation problem,
% find a robustly regulating controller, 
we define
% a basis of
\eq{
\mathcal{V}_k &=\Span\setm{\widetilde{P}^+(i\gw_k)\yrk}{\gamma_1,\, \gamma_2,\, \gamma_3\,\in (0,1)}
}
for $\gw_{k}=k$ and $k=-1,0,1$. Let $\vec{e}_l$ be the $l$th natural basis vector of $\C^3$. Because of the upper triangular block structure of $\widetilde{P}(i\gw_k)$, it is easy to deduce that
%\begin{align*}
$\mathcal{V}_{-1}=\mathcal{V}_1=\Span\{\vec{e}_1,\vec{e}_2\}$
%\end{align*}
and $\mathcal{V}_0=\mathbb{C}^3$. Following the design procedure of Theorem~\ref{thm:ContDesignStab}, we choose $H_{-1}=\mathrm{diag}(1,1,0)=H_1$ and $H_0=I$. Now the controller \eqref{eq:ContSimplePole2} satisfies the regulation property \eqref{eq:Rangecondition}.

It remains to choose invertible $D_k$ and small enough $\eps>0$ to guarantee stability. We can choose $D_k=-I$ since all the eigenvalues of $P(ik)$ have positive real parts for every $k=-1,0,1$. If we choose $\eps=1$, then we have
\begin{align*}
C(s) & =-\left(\frac{1}{s+i}+\frac{1}{s-i}\right)\begin{bmatrix}
1 & 0 & 0\\
0 & 1 & 0\\
0 & 0 & 0
\end{bmatrix}
-\frac{1}{s}\begin{bmatrix}
1 & 0 & 0\\
0 & 1 & 0\\
0 & 0 & 1
\end{bmatrix}\\
 & =\begin{bmatrix}
-\frac{3s^2+1}{s^3+s} & 0 & 0\\
0 & -\frac{3s^2+1}{s^3+s} & 0\\
0 & 0 & -\frac{1}{s}
\end{bmatrix}.
\end{align*}
In order to show that the controller is stabilizing for $P(\cdot)$ we note that the proof of Theorem \ref{thm:DesignParametere} shows that it is sufficient that $(I-P(\cdot)C(\cdot))^{-1}$ is stable. The stability follows now by observing that the zeros of
$$
\det(I-P(s)C(s))=\frac{\left\{ \begin{array}{c}
4s^{10} + 20s^9 + 62s^8 + 140s^7\\ + 216s^6 + 262s^5 + 217s^4\\ + 136s^3 + 58s^2 + 18s + 3
\end{array}
\right\}}{4s^3(s + 1)(s + 2)^2(s^2 + 1)^2}
$$
have negative real parts, i.e. $(I-P(\cdot)C(\cdot))^{-1}$ cannot have poles in the closed right half plane $\C_+$.

\begin{figure}
\includegraphics[scale=.6]{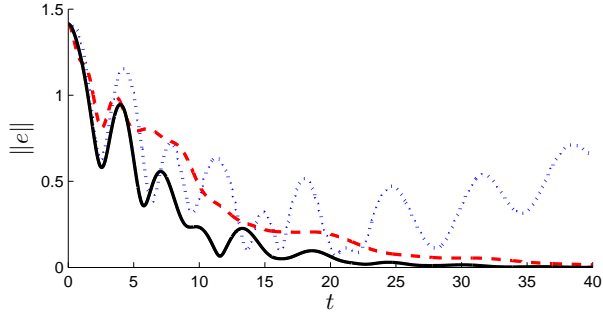}
\caption{The output performance of the closed loop system}
\label{fig:performance}
\end{figure}

The achieved output performance is illustrated in Fig. \ref{fig:performance} for the nominal plant (solid line) and for two perturbed plants where the error norm $\|e(t)\|$ %of the error between the output and the reference signal 
is plotted with respect to $t$. Convergence to the reference signal is guaranteed only for stabilized plants. The first perturbed plant (dashed line) with valve positions $\gamma_1=0,7$, $\gamma_2=0,9$, and $\gamma_3=0,2$ is stabilized by $C(\cdot)$, so the convergence follows. However, instability of the closed loop system with the second perturbed plant (dotted line) having valve positions $\gamma_1=0,25$, $\gamma_2=0,25$, and $\gamma_3=0,45$, causes undesired output behavior.

We end this section by comparing the proposed design procedure with the classical one. Here we have used the knowledge that the first two inputs do not affect the third output, i.e., we have structured perturbations, whereas the classical design procedure ignores this fact and the perturbations are taken to be totally arbitrary. 
%One should also note that small perturbations in the other parameters such as the hole or tank diameters do not affect the zero components of the plant transfer function, so actually we gain robustness for all small parameter changes. 
In our controller the internal model is minimal in the sense that the ranks of the matrices $C_k$ for $k=-1,0,1$ are minimal. Since $C_1$ and $C_{-1}$ have rank two instead of rank three, which would be the case if the classical approach is used, the order of the controller's realization is reduced by two. Finally, the possible numerical inaccuracies when determining $\ran(\widetilde{P}(i\gw_k)C_k)$ can result unwanted behavior in general, which does not happen in the classical approach as long as the closed-loop system remains stable. However, small errors for $\ran(\widetilde{P}(i\gw_k)C_k)$ result only to small errors in regulation. More importantly, no such issues arise in our example
%there are no numerical issues involved with our example, 
since we can determine the structure of the system without using numerical estimations.

\section{Conclusions}

We have studied robust regulation 
in the situation where the class of perturbations is restricted.
As our main result we presented necessary and sufficient conditions for a stabilizing controller to be robust 
with respect to a given class of perturbations.
%and is a frequency domain formulation of the internal model principle. 
Our results in particular show that depending on the class of perturbations
the size of the internal model in the controller can in some situations be reduced.
%a full internal model is sometimes not necessary for robustness.
%for a restricted class of perturbations the size of the internal model and consequently the order of the controller may be reduced. 
We introduced a design procedure for constructing a robustly regulating controller with a minimal internal model.
In this paper we have considered reference signals that are trigonometric polynomials, and future research topics include extending the results for more general reference signals including polynomially increasing functions.
%In this article, the reference signal was chosen to be a sum of sinusoidal signals. Our aim is to extend the results to a larger class of reference signals.

\bibliographystyle{plain}

\end{document}